\def\H{\mathbf{H}}
\def\text#1{\hbox{#1}}
\def\R{{\mathbb R}}
\def\N{{\mathbb N}}
\def\H{{\mathbf H}}
\def\O{{\mathcal O}}
\def\Irr{{\rm Irr}}
\def\Hom{{\rm Hom}}
\def\GL{{\rm GL}}
\def\N{{\rm N}}
\def\tr{{\rm tr}}
\def\G{{\mathbf{G}}}
\def\O{{\mathcal{O}_F}}
\def\cInd{{\operatorname{c-Ind}}}
\def\Irr{{\rm{Irr}^{sc}}}
    \DeclareFontFamily{U}{wncy}{}
    \DeclareFontShape{U}{wncy}{m}{n}{<->wncyr10}{}
    \DeclareSymbolFont{mcy}{U}{wncy}{m}{n}
    \DeclareMathSymbol{\Sh}{\mathord}{mcy}{"58}
\newtheorem{theorem}{Theorem}[section]
\newtheorem{lemma}[theorem]{Lemma}
\newtheorem{corollary}[theorem]{Corollary}
\newtheorem{proposition}[theorem]{Proposition}
\newtheorem{conjecture}[theorem]{Conjecture}
\date{\today}
\theoremstyle{definition}
\numberwithin{equation}{section}
\newcommand\blfootnote[1]{%
  \begingroup
  \renewcommand\thefootnote{}\footnote{#1}%
  \addtocounter{footnote}{-1}%
  \endgroup
}
\def\N{{\mathbb N}}
\title{Uniform bounds on the Harish-Chandra characters} 
\date{}
 \author{Anna Szumowicz}
  \address{Caltech, The Division of Physics, Mathematics and Astronomy,
1200 E California Blvd, Pasadena CA 91125}
\email{anna.szumowicz@caltech.edu}
\begin{document}
\maketitle
\blfootnote{\today}
\begin{abstract}
 Let $\G$ be a connected reductive algebraic group over a $p$-adic local field $F$. In this paper we study the asymptotic behaviour of the trace characters $\theta _{\pi}$ evaluated at a regular element $\gamma $ of $\G(F)$ as $\pi$ varies among supercuspidal representations of $\G(F)$. Kim, Shin and Templier \cite{KST2016} conjectured that $\frac{\theta_{\pi}(\gamma)}{\deg(\pi)}$ tends to $0$ when $\pi$ runs over irreducible supercuspidal representations of $\G(F)$ with unitary central character and the formal degree of $\pi$ tends to infinity. For $\G$ semisimple we prove that the trace character is uniformly bounded on $\gamma$ under the assumption, which is expected to hold true for every $\G (F)$, that all irreducible supercuspidal representations of $\G(F)$ are compactly induced from an open compact modulo center subgroup. Moreover, we give an explicit upper bound in the case of $\gamma $ ellitpic. \end{abstract} 

\section{Introduction}
Let $\G$ be a connected reductive algebraic group over a $p$-adic local field $F$. Denote $G:=\G(F)$. Let $\pi$ be an admissible representation of $G$. Denote by $\mathcal{C}^{\infty}_{c}(G)$ the set of smooth compactly supported functions on $G$. The trace character $\theta _{\pi}$ is the conjugation invariant distribution on $\mathcal{C}_{c}^{\infty}(G)$ defined as
\begin{equation*} 
\langle \theta_{\pi},f\rangle :=\tr\, \pi (f)
\end{equation*}
for $f\in \mathcal{C}_{c}^{\infty}(G)$. Harish-Chandra showed that $\theta _{\pi}$ is represented by a locally constant function on the set of regular semisimple elements which is locally $L^{1}$ on $G$. We will still denote this function by $\theta _{\pi}$.  

If $\pi$ is a discrete series representation, for example is irreducible unitary supercuspidal, one can define the formal degree of $\pi$ denoted by $\deg (\pi)$. It plays a similar role as the dimension of representations of compact groups. 

Denote by $\Irr(\G(F))$ the set of equivalence classes of irreducible supercuspidal representations of $G$. Kim, Shin and Templier proposed the following conjecture.
\begin{conjecture}[{\cite[Conjecture 4.1.]{KST2016}}]\label{conjectureKST}
 Fix a regular semisimple element $\gamma \in G$. Then,
\begin{equation*}    
\lim _{\substack{\pi \in \Irr (G) \\ \deg(\pi )\to \infty}}
\frac{\theta _{\pi}(\gamma)}{\deg (\pi)}= 0 
\end{equation*} 
where $\pi \in \Irr(\G (F))$ varies over representations whose central character is unitary.
\end{conjecture}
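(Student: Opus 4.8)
The plan is to derive Conjecture~\ref{conjectureKST} for $\G$ semisimple from the stronger statement that, for each fixed regular semisimple $\gamma\in G$, the value $|\theta_\pi(\gamma)|$ is bounded by a constant $C=C(\G,\gamma)$ as $\pi$ ranges over $\Irr(\G(F))$; since $\deg(\pi)\to\infty$ along the net in the conjecture, such a bound at once forces $\theta_\pi(\gamma)/\deg(\pi)\to 0$. Fix $\gamma$, and let $\pi$ be an irreducible supercuspidal representation. By hypothesis $\pi=\cInd_J^G\rho$ with $J$ open and compact modulo $Z:=Z(\G)(F)$ and $\rho$ a smooth irreducible representation of $J$; as $\G$ is semisimple, $Z$ is finite, so $J$ is a \emph{compact} open subgroup and $\dim\rho<\infty$. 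The starting point is the Frobenius--type formula for the character of a compactly induced representation: for $\gamma$ regular semisimple,
\begin{equation*}
\theta_\pi(\gamma)\;=\;\int_{\{\,x\in J\backslash G\;:\;x\gamma x^{-1}\in J\,\}}\theta_\rho\bigl(x\gamma x^{-1}\bigr)\,dx ,
\end{equation*}
with respect to a suitable invariant measure, the integrand being well defined on $J\backslash G$ because conjugation by $J$ preserves both $J$ and $\theta_\rho$. If $\gamma$ is not a compact element then no $G$-conjugate of it lies in the compact group $J$, the integrand vanishes, and $\theta_\pi(\gamma)=0$; so assume $\gamma$ compact. When $\gamma$ is moreover elliptic, $Z_G(\gamma)$ is compact, the domain of integration is a finite union of left $J$-cosets, and the integral is a genuine finite sum.

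I would next localise in the Bruhat--Tits building $\mathcal{B}(G)$. The compact group $J$ has a nonempty fixed-point set $\mathcal{B}(G)^J$, and any $x$ in the domain above has $x\gamma x^{-1}$ fixing $\mathcal{B}(G)^J$, hence $\gamma$ fixes $x^{-1}\cdot\mathcal{B}(G)^J$; thus $x^{-1}\cdot\mathcal{B}(G)^J\subseteq\mathcal{B}(G)^\gamma$. For $\gamma$ regular semisimple \emph{elliptic} the fixed-point set $\mathcal{B}(G)^\gamma$ coincides with the building of the torus $Z_G(\gamma)$, which is anisotropic modulo $Z$ and therefore — as $\G$ is semisimple — a single point $x_0$. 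Hence, unless $\mathcal{B}(G)^J$ is a single point lying in the $G$-orbit of $x_0$, no $x$ contributes and $\theta_\pi(\gamma)=0$. Otherwise, replacing $(J,\rho)$ by a $G$-conjugate (which leaves $\pi$, hence $\theta_\pi$, unchanged) we may assume $\mathcal{B}(G)^J=\{x_0\}$, so that $J\subseteq K:=\Stab_G(x_0)$, a compact open subgroup that also contains $\gamma$; then $x\gamma x^{-1}\in J$ forces $x^{-1}x_0=x_0$, i.e. $x\in K$, and the formula collapses to the ordinary Frobenius sum for the finite-index inclusion $J\subseteq K$,
\begin{equation*}
\theta_\pi(\gamma)\;=\;\theta_{\,\Ind_J^{K}\rho}(\gamma).
\end{equation*}
Finally, irreducibility of $\cInd_J^G\rho$ forces the intertwining set $I_G(\rho)$ to equal $J$, so $I_{K}(\rho)=I_G(\rho)\cap K=J$; by Mackey's formula $\langle\Ind_J^{K}\rho,\Ind_J^{K}\rho\rangle_{K}=1$, whence $\sigma:=\Ind_J^{K}\rho$ is an \emph{irreducible} representation of the compact group $K$, and $\theta_\pi(\gamma)=\theta_\sigma(\gamma)$.

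The crux is therefore a uniform character bound for compact $p$-adic groups: one needs $|\theta_\sigma(\gamma)|\leq C(\G,\gamma)$ for every irreducible smooth representation $\sigma$ of $K$ and every regular semisimple $\gamma\in K$, as $K$ ranges over the finitely many $G$-conjugacy classes of point stabilizers $\Stab_G(x)$, $x\in\mathcal{B}(G)$. To get it I would pass through the Moy--Prasad congruence filtration of $K$: an irreducible $\sigma$ has a well-defined depth, near which its restriction to the relevant congruence layer is governed by a character either of a depth-zero reductive quotient $K/K_{0+}$ — a finite group of Lie type over the residue field — or of a Lie-algebra layer $K_r/K_{r+}$; the image of $\gamma$ in the layer matching $\sigma$ is generically regular semisimple there, and one invokes the known uniform bounds for irreducible characters of finite groups of Lie type at regular semisimple elements, which depend only on the absolute root datum (controlled, for instance, by $|W|$ and the number of parahoric types) and not on the residue cardinality. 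For the ``deep'' elements $\gamma$, whose image degenerates in every relevant layer, the very requirement that a conjugate of $\gamma$ lie in $J$ bounds the depth of $\pi$ in terms of that of $\gamma$, so only a bounded range of layers occurs and the estimate can be propagated. Tracking the constants along this reduction yields the explicit bound when $\gamma$ is elliptic.

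The principal obstacle is precisely this character bound: controlling the character of an \emph{arbitrarily deep} irreducible representation of a compact $p$-adic group at a regular semisimple element, uniformly in the representation — a $p$-adic analogue of the finite-group-of-Lie-type character bounds — together with the bookkeeping needed to align the depth of $\sigma$ with the layer in which $\gamma$ becomes regular. A secondary difficulty, and the reason the explicit constant is obtained only for $\gamma$ elliptic, is that for non-elliptic (but still compact) regular semisimple $\gamma$ the fixed-point set $\mathcal{B}(G)^\gamma$ is a flat of positive dimension rather than a single point: the integral above no longer localises at one vertex, and one must instead extract convergence and a merely qualitative bound from a finer analysis along this flat.
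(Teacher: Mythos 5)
Your high-level strategy (bound $|\theta_\pi(\gamma)|$ by a constant depending only on $\gamma$ and reduce, via compact induction, to character bounds on compact subgroups) is the same as the paper's, but the execution has a genuine gap exactly where the paper does its main work: the non-elliptic case. The conjecture is for all regular semisimple $\gamma$, and in a semisimple group there are plenty of compact but non-elliptic regular semisimple elements (e.g.\ a regular unit diagonal element of $\SL_2(F)$). For such $\gamma$ your starting Frobenius-type formula is precisely what fails to be usable: the set $\{x\in J\backslash G:\ x\gamma x^{-1}\in J\}$ is invariant under the noncompact centralizer $Z_G(\gamma)$, hence is an infinite union of cosets, and neither convergence of the integral nor the identity with $\theta_\pi(\gamma)$ is available in this naive form. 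The paper handles this by replacing the naive formula with Arthur's weighted formula (integration over $A_M(\gamma)\backslash G$ against the weight $v_M$, with $\theta_\pi(\chi_\rho)=1$) and by proving that $\Omega(K)$ is compact \emph{modulo} $A_M(\gamma)$ (Proposition \ref{propositionomegacompact}, via the Satake--Berkovich compactification for the elliptic part inside the Levi and the explicit root-group computation of Lemmas \ref{lemmaconjugationbyn}--\ref{lemmaNgammacompact}). You explicitly defer this case to ``a finer analysis along this flat,'' so the heart of the theorem is missing from the proposal.

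Two further points. First, even in the elliptic case your localisation rests on the false claim that $\mathcal{B}(G)^\gamma$ is a single point: the fixed-point set of a regular elliptic element is compact but in general large (any regular elliptic $\gamma$ lying deep in a congruence subgroup fixes a big ball), so the asserted vanishing ``unless $\mathcal{B}(G)^J=\{x_0\}$'' and the collapse of the integral to $x\in K$ are wrong; what is true, and what the paper uses (Lemma \ref{lemmaSisfinite}), is only that the relevant set is compact, hence a finite union of cosets, after which one still needs a bound on each term. Second, the ingredient you call the crux --- a bound on $|\chi_\sigma(\gamma)|$ for irreducible characters of compact open subgroups, uniform over all representations and all depths --- is not established by your Moy--Prasad sketch; aligning the depth of $\sigma$ with a layer in which $\gamma$ stays regular, and controlling the constants independently of the residue field, is exactly the content of Fr\k{a}czyk's theorem (Proposition \ref{propositionFraczyk}, giving $|\chi_\rho(\gamma)|\leqslant|\Delta(\gamma)|_F^{-1}2^{\dim\G-{\rm rk}\,\G}|W|$), which the paper cites as its second main input. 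As written, your proposal assumes both main ingredients (the non-elliptic analysis and the compact character bound) rather than proving them.
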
 

Kim, Shin and Templier \cite{KST2016} proved Conjecture \ref{conjectureKST} in case of 
$\gamma $ topologically unipotent, $\pi $ constructed by Yu \cite{Yu2001} and under certain assumptions on $G$. In \cite{KST2} they proved Conjecture \ref{conjectureKST} for $\gamma$ elliptic and under certain assumption on tori in $\G$ which is satisfied for example when every $F$-torus in $\G$ is tame and the characteristic of the residue field of $F$ does not divide the order of the Weyl group of $\G$. An analogue of this conjecture for compact semisimple Lie groups can be easily verified using the Weyl character formula. See also \cite{Witte} for partial results in the case of the general linear group.      

As described in \cite{KST2}, bounds on the trace characters are motivated by their relation with limit multiplicity formulas, Sato-Tate equidistribution and bounds towards Ramanujan.

\subsection{Uniform bounds}
In this paper we prove Conjecture \ref{conjectureKST} for arbitrary semisimple connected $p$-adic groups for irreducible supercuspidal representations of the form $\cInd _{J}^{G}\Lambda$ where $J$ is an open compact modulo center subgroup of $G$. The condition is expected to hold true for any irreducible supercuspidal representation. It has been proven that every irreducible supercuspidal representation is a compact induction from an open compact modulo center subgroup in the following cases: tamely ramified $p$-adic group $\G$ when $p$ does not divide the order of the Weyl group of $\G$ (Yu's construction \cite{Yu2001} and the exhaustion results by Kim \cite{Kim2007} and Fintzen \cite{Fintzen}), for all supercuspidal representations of $\GL_{n}(F)$ (Bushnell-Kutzko construction \cite{BK}) and for the classical groups (Stevens \cite{Stevens}). 
\newline Kim, Shin and Templier in their work \cite{KST2016}, \cite{KST2} towards Conjecture \ref{conjectureKST} rely on Yu's construction of supercuspidal representations. Our methods are different and avoid Yu's construction, instead using the Arthur's formula and bounds on irreducible characters of compact $p$-adic groups from \cite{Fraczyk}. 
We will say that a representation $\pi$ of $\G(F)$ is compactly induced if it is of the form $\pi \cong \cInd _{J}^{\G(F)}\Lambda $ for some compact modulo $Z(\G(F))$ subgroup $J$ and some irreducible representation $\Lambda $ of $J$.
\begin{theorem}\label{thmmain}
Let $\G$ be a connected semisimple group over a non-Archimedean local field $F$ of characteristic zero. 
Then, for every regular semisimple element $\gamma \in \G(F)$ there exists a constant $\kappa _{\gamma}>0$ depending only on $\gamma$ and $G$ such that for every irreducible compactly induced supercuspidal representation $\pi$ we have 
\begin{equation*} 
|\theta_{\pi}(\gamma)|\leqslant \kappa_{\gamma}. 
\end{equation*}
\end{theorem}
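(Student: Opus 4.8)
The argument rests on three inputs: a reduction to elliptic $\gamma$, Arthur's formula for the character of a compactly induced representation, and the uniform bounds on irreducible characters of compact $p$-adic groups from \cite{Fraczyk}.

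First I would dispose of the non-elliptic regular semisimple $\gamma$. Such a $\gamma$ lies in a proper Levi subgroup $M$ of $G$, and Harish--Chandra's parabolic descent for characters, combined with the vanishing of every proper Jacquet module of the supercuspidal $\pi$, gives $\theta_\pi(\gamma)=0$; equivalently, the character of a supercuspidal representation is supported, up to conjugacy, on the elliptic regular set. So for non-elliptic $\gamma$ any positive $\kappa_\gamma$ works, and from now on $\gamma$ is elliptic. Since $\G$ is semisimple, $Z(G)$ is finite, so $T:=Z_G(\gamma)$ is a compact torus, every open subgroup of $G$ which is compact modulo the centre is actually compact, and in particular $J$ is compact and $\Lambda$ is finite dimensional with a genuine character $\theta_\Lambda$ on $J$.

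Next, by Arthur's character formula for $\pi=\cInd_J^G\Lambda$, for $\gamma$ regular elliptic one has, with Haar measures normalised compatibly and up to a constant depending only on $\gamma$,
\begin{equation*}
\theta_\pi(\gamma)=\int_{T\backslash G}\dot\theta_\Lambda(g^{-1}\gamma g)\,dg ,
\end{equation*}
where $\dot\theta_\Lambda$ is the extension of $\theta_\Lambda$ by zero from $J$ to $G$. Because $\gamma$ is elliptic and $J$ is compact, $\{\,Tg\in T\backslash G:\ g^{-1}\gamma g\in J\,\}$ is compact, so the integral is in fact a finite sum $\sum_g c(g)\,\theta_\Lambda(g^{-1}\gamma g)$ over representatives of the finitely many $(T,J)$-double cosets meeting it, with $c(g)=\vol(J)\,[T:T\cap gJg^{-1}]<\infty$. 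Taking absolute values,
\begin{equation*}
|\theta_\pi(\gamma)|\ \leqslant\ \int_{T\backslash G}\mathbf{1}_{J}(g^{-1}\gamma g)\,\bigl|\theta_\Lambda(g^{-1}\gamma g)\bigr|\,dg .
\end{equation*}
Every $g^{-1}\gamma g$ occurring here is a regular semisimple element of the compact group $J$, $G$-conjugate to $\gamma$, hence with Weyl discriminant $D(\gamma)$. The bounds of \cite{Fraczyk}, applied uniformly in the irreducible representation $\Lambda$ and in the compact open subgroup $J\subseteq G$, then yield $|\theta_\Lambda(g^{-1}\gamma g)|\leqslant C_\gamma$, a constant depending only on $\gamma$ and $G$ (through $|D(\gamma)|$ and the root datum of $\G$). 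Therefore $|\theta_\pi(\gamma)|\leqslant C_\gamma\cdot O_\gamma(\mathbf{1}_{J})$ with $O_\gamma(\mathbf{1}_{J})=\int_{T\backslash G}\mathbf{1}_{J}(g^{-1}\gamma g)\,dg$.

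It then remains to bound the orbital integral $O_\gamma(\mathbf{1}_{J})$ uniformly in $J$. The group $J$ is contained in a maximal compact subgroup $K$ of $G$, so $O_\gamma(\mathbf{1}_{J})\leqslant O_\gamma(\mathbf{1}_{K})$; since $\gamma$ is elliptic its fixed-point set in the Bruhat--Tits building is bounded, hence, the building being locally finite, contains only finitely many facets, which makes $O_\gamma(\mathbf{1}_{K})$ finite, and maximising over the finitely many $G$-conjugacy classes of maximal compact subgroups bounds it by a constant $C'_\gamma$ depending only on $\gamma$ and $G$. This proves the theorem with $\kappa_\gamma:=C_\gamma C'_\gamma$. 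For the explicit bound when $\gamma$ is elliptic one tracks the constants: $C_\gamma$ is made explicit in \cite{Fraczyk}, and $C'_\gamma$ is an explicit weighted count of facets of the building fixed by $\gamma$. The main difficulty is the step invoking \cite{Fraczyk}: one needs the character bound to be uniform over the inducing subgroups $J$, which become arbitrarily deep as $\deg(\pi)\to\infty$, and one must check that this uniform bound, multiplied by the geometric count $O_\gamma(\mathbf{1}_{J})$, does not grow with the depth of $\pi$; reconciling these two estimates is where the real work lies.
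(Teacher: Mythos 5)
Your reduction to the elliptic case rests on a false statement: the Harish--Chandra character of a supercuspidal representation does \emph{not} vanish at every non-elliptic regular semisimple element. What parabolic descent plus the vanishing of Jacquet modules actually gives is the Deligne--Casselman theorem, which yields $\theta_\pi(\gamma)=0$ only when the non-compact part of $\gamma$ is sufficiently contracting (deep enough in the relevant cone), not for every $\gamma$ lying in a proper Levi; explicit computations (e.g.\ Sally--Shalika for $\SL_2$) show supercuspidal characters are nonzero at many split regular elements. This is precisely why the paper cannot dismiss non-elliptic $\gamma$: its Section 2 (compactness of $\Omega(K)$ inside $A_{M}(\gamma)\setminus G$, proved via root-group computations and the compactified building) and its use of Arthur's \emph{weighted} character formula, with the weight function $v_{M}$ and the identity $\theta_\pi(\chi_\rho)=1$, exist exactly to handle that case. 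So the bulk of the theorem is missing from your argument.

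In the elliptic case your estimate is not uniform in $\pi$, and you flag this yourself without resolving it. With correct normalizations the Frobenius-type formula for $\cInd_J^G\Lambda$ is a sum of $\theta_\Lambda(g^{-1}\gamma g)$ over cosets $gJ$ with $g^{-1}\gamma g\in J$; equivalently, your integral over $T\backslash G$ carries a factor $\vol(J)^{-1}$, not a constant depending only on $\gamma$. The number of contributing cosets (or the multiplicities $[T:T\cap gJg^{-1}]$) grows without bound as $J$ gets deeper --- already the cosets $tJ$ with $t\in (T\cap K)/(T\cap J)$ contribute --- so bounding each term by Fr\k aczyk's constant and using $O_\gamma(\mathbf{1}_J)\leqslant O_\gamma(\mathbf{1}_K)$ does not give a bound independent of $\pi$. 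The missing idea, which is the paper's key move, is to induct in stages: since $Z(G)$ is finite, $J$ is compact and one may write $\pi\cong\cInd_K^G\rho$ with $K$ a maximal compact open subgroup and $\rho$ irreducible; then the geometric count $|\Omega(K)/K|$ depends only on $\gamma$ and $K$ (and there are finitely many conjugacy classes of maximal compacts), while all the depth of $\pi$ is absorbed into $\rho$, to which Fr\k aczyk's bound applies uniformly over irreducible representations of compact subgroups. With that modification your elliptic argument essentially coincides with the paper's; without it, the claimed constant $\kappa_\gamma$ has not been shown to be independent of $\pi$.
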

 In particular, it implies the conjecture.
 \begin{corollary} 
 Let $\G$ be a connected semisimple group over a local non-Archimedean field $F$ of characteristic zero. 
 Then for every regular semisimple element $\gamma \in \G(F)$ we have 
\begin{equation*} 
\lim_{\substack{\pi\in \Irr (\G(F))\\\deg(\pi)\to \infty}}\frac{\theta _{\pi}(\gamma)}{\deg (\pi )}=0
\end{equation*} 
where each $\pi $ in the limit is assumed to be compactly induced.
 \end{corollary}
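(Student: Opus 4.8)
The plan is to combine Arthur's formula for the character of a compactly induced representation with the uniform bounds of \cite{Fraczyk} on the irreducible characters of compact $p$-adic groups, after first reducing to the case of elliptic $\gamma$. If $\gamma$ is regular semisimple but \emph{not} elliptic, then $\gamma$ lies in a proper Levi subgroup and the character of any irreducible supercuspidal representation vanishes at $\gamma$; this is the statement that supercuspidal characters are cuspidal distributions (all their constant terms along proper parabolics vanish, equivalently, via Casselman's description of the character near a split-regular element, the relevant Jacquet module of $\pi$ is zero). So $\kappa_\gamma=0$ works there, and one may assume $\gamma$ elliptic, i.e.\ $T:=Z_G(\gamma)$ anisotropic modulo the centre. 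Because $\G$ is semisimple, $Z(\G(F))$ is finite, hence $T$ is compact and, more generally, every subgroup of $G$ compact modulo the centre --- in particular the group $J$ in $\pi\cong\cInd_J^G\Lambda$ --- is compact, so $\Lambda$ is a finite-dimensional representation of the open compact group $J$.

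\textbf{Arthur's character formula.} For $\gamma$ elliptic regular semisimple the character of $\pi=\cInd_J^G\Lambda$ is the finite sum
\[
\theta_\pi(\gamma)=\sum_{i=1}^{m}c_i\,\Theta_\Lambda(\delta_i),\qquad c_i=[\,Z_G(\delta_i):Z_G(\delta_i)\cap J\,],
\]
where $\delta_1,\dots,\delta_m$ represent the $J$-conjugacy classes of those elements of $J$ that are $G$-conjugate to $\gamma$. The sum is finite: the conjugacy class of $\gamma$ is closed ($\gamma$ being semisimple) and meets the compact group $J$, while regularity makes the $J$-orbits on this compact intersection open, hence finite in number; each $c_i$ is finite since $Z_G(\delta_i)$ is compact (being conjugate to $T$) and $J$ is open. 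One also checks that $\sum_{i=1}^m c_i=\#\{\,gJ\in G/J : g^{-1}\gamma g\in J\,\}$, that is, a normalized orbital integral of the characteristic function $\mathbf 1_J$ at $\gamma$.

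\textbf{The two uniform estimates.} It then remains to bound, independently of $(J,\Lambda)$: (i) the total weight $\sum_{i}c_i$ and (ii) each $|\Theta_\Lambda(\delta_i)|$. For (i) one would use that $J$ is contained in one of the \emph{finitely many} conjugacy classes of maximal compact subgroups $K$ of $G$, and that the presence in $J$ of an element conjugate to the fixed regular semisimple element $\gamma$ prevents $J$ from being too deep inside $K$, so that $[K:J]$ and the number of relevant double cosets --- hence the orbital integral $\sum_i c_i$ --- are bounded by a constant $A_\gamma$ depending only on $\gamma$ and $G$. For (ii) one invokes the key input: by the estimates of \cite{Fraczyk} for the compact $p$-adic group $J$, the irreducible characters of $J$ are bounded, uniformly in $\Lambda$, at the regular semisimple elements $\delta_i$, giving $|\Theta_\Lambda(\delta_i)|\leqslant B_\gamma$ with $B_\gamma$ depending only on $\gamma$ and $G$. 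Combining, $|\theta_\pi(\gamma)|\leqslant A_\gamma B_\gamma=:\kappa_\gamma$, which proves Theorem~\ref{thmmain}; the corollary follows at once, since $\deg(\pi)\to\infty$ while $\theta_\pi(\gamma)$ stays bounded.

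\textbf{Where the difficulty lies.} The hard part will be estimate (ii): for compact $p$-adic groups the character values at a fixed regular semisimple element stay bounded as $\dim\Lambda\to\infty$, in sharp contrast with compact Lie groups, where the Weyl character formula produces unbounded values --- and it is exactly this boundedness, rather than a mere $o(\dim\Lambda)$ bound, that yields a bound on $\theta_\pi(\gamma)$ itself and not only on $\theta_\pi(\gamma)/\deg(\pi)$. A secondary point needing care is the uniformity in (i): as $J$ ranges over infinitely many subgroups one cannot simply quote Arthur's formula for a fixed $J$, and the orbital integral of $\mathbf 1_J$ must be controlled uniformly, which is where the finiteness of the set of conjugacy classes of maximal compact subgroups of $G$ enters.
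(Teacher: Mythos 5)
Your deduction of the corollary from a uniform bound $|\theta_\pi(\gamma)|\leqslant\kappa_\gamma$ is the same as the paper's, but the uniform bound itself is where your argument breaks, in two places. First, the reduction to elliptic $\gamma$ is based on a false claim: supercuspidal characters do \emph{not} vanish at all non-elliptic regular semisimple elements. Vanishing of the Jacquet modules, via the Casselman--Deligne description, controls $\theta_\pi$ only when the split part of the element is sufficiently far in the split directions; it says nothing about non-elliptic regular elements lying in a maximal compact subgroup (e.g.\ a diagonal element of $\SL_2(F)$ with unit, non-congruent eigenvalues), and the local character expansion shows $\theta_\pi$ is generically nonzero on such elements near the identity. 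This is precisely why the paper cannot take $\kappa_\gamma=0$ off the elliptic set and instead must run Arthur's weighted character formula with the weight $v_M$, and why it needs the compactness of $\Omega(K)$ in the non-elliptic case (the parabolic descent in Proposition \ref{propositionomegacompact} together with Lemmas \ref{lemmaconjugationbyn} and \ref{lemmaNgammacompact}); none of that machinery would exist if your vanishing claim were true.

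Second, even in the elliptic case your estimate (i) is not established. The justification offered --- that containing a conjugate of the fixed $\gamma$ forces $[K:J]$ to be bounded --- is false: inducing subgroups $J$ of supercuspidals of arbitrarily large depth typically contain the full unit group of an elliptic maximal torus, hence can contain a fixed depth-zero regular elliptic $\gamma$, while $[K:J]\to\infty$. So the number of cosets $gJ$ with $g^{-1}\gamma g\in J$ (equivalently the normalized orbital integral of $\mathbf 1_J$) is not controlled by your argument uniformly in $J$, and this uniformity is exactly the delicate point. The paper sidesteps it by induction in stages: since $\G$ is semisimple, $J$ is compact and sits in a maximal compact $K$, and $\cInd_J^{\G(F)}\Lambda\cong\cInd_K^{\G(F)}\rho$ with $\rho=\cInd_J^K\Lambda$ irreducible; one then works only with the finitely many conjugacy classes of maximal compacts, uses compactness of $\Omega(K)$ (via the Satake--Berkovich compactification of the building, Lemma \ref{lemmaSisfinite}) to bound the number of cosets, and applies Fr\k{a}czyk's bound to the character of $\rho$ on $K$ rather than to $\Lambda$ on $J$. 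Your estimate (ii) is the correct key input, but as written your route to Theorem \ref{thmmain} has these two genuine gaps, so the corollary is not yet proved.
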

 For $\gamma $ ellitpic we prove an explicit upper bound. 
 \begin{theorem} 
 Let $\G$ be a connected semisimple group over a non-Archimedean local field $F$ of characteristic zero. Let $W$ be the Weyl group of $\G$. Then, for every regular elliptic element $\gamma\in \mathbf{G} (F)$and every irreducible compactly induced supercuspidal representation $\pi$ we have 
 \begin{equation*} 
 |\theta _{\pi} (\gamma )|\leqslant | \Delta (\gamma )|_{F}^{-1} 2^{{\rm dim}\, \G - {\rm rk}\, \mathbf{G} }\, |W|. 
 \end{equation*} 
 \end{theorem}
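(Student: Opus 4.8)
The plan is to start from the defining compact induction $\pi \cong \cInd_J^G \Lambda$ with $J$ open and compact modulo the center $Z = Z(\G(F))$, and use the Frobenius formula for the character of an induced representation. Since $\gamma$ is elliptic and regular, it is contained in a compact-mod-center torus, and the relevant integral orbital-type formula expresses $\theta_\pi(\gamma)$ as a sum over the finitely many $G$-conjugacy classes of $J$ meeting the conjugacy class of $\gamma$, namely
\begin{equation*}
\theta_\pi(\gamma) = \sum_{g \in J\backslash X_\gamma / \langle \gamma\rangle} \frac{1}{|\,\cdots\,|}\,\tr\Lambda(g^{-1}\gamma g),
\end{equation*}
where $X_\gamma = \{g \in G : g^{-1}\gamma g \in J\}$; because $J$ is compact mod center and $\gamma$ is elliptic, this set is a finite union of cosets. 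First I would make this precise using the Arthur-type trace formula mentioned in the introduction, which packages the character of a compactly induced representation as a weighted sum of values of $\tr\Lambda$ on conjugates of $\gamma$ landing in $J$.

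The second step is to bound each term $|\tr\Lambda(g^{-1}\gamma g)|$. Here the element $\delta := g^{-1}\gamma g$ lies in $J$ and is regular semisimple, so it lies in a compact-mod-center subgroup, and $\Lambda$ restricted to the relevant maximal compact (mod center) piece of $J$ is a finite-dimensional representation of a compact $p$-adic group. The bounds on irreducible characters of compact $p$-adic groups from \cite{Fraczyk} give control of $|\tr\Lambda(\delta)|$ in terms of the centralizer of $\delta$ and the Weil-discriminant-type factor; specifically one expects an estimate of the shape $|\tr\Lambda(\delta)| \le \dim(\Lambda)\cdot|\Delta(\delta)|_F^{-1/2}$ times a combinatorial factor, and since $\Delta$ is conjugation-invariant, $|\Delta(\delta)|_F = |\Delta(\gamma)|_F$.

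The third step is to count and weight: the number of contributing double cosets, together with the orbital-integral normalization, is where the factor $2^{\dim\G - \rk\G}|W|$ must come from. The exponent $\dim\G - \rk\G$ is the number of roots, so $2^{\dim\G-\rk\G}$ is naturally the size of a power set indexed by roots — I expect this to enter either through a bound on the number of $J$-conjugacy classes inside a single $G$-class (controlled by $H^1$ of the centralizer torus, whose order is bounded by $2$ to the number of roots via the structure of the coweight lattice modulo $2$) or through a dyadic decomposition of the regular set near $\gamma$. The $|W|$ accounts for the Weyl-group ambiguity in matching conjugates of $\gamma$ inside the torus. After assembling these, the $\dim\Lambda$ and the $|\Delta|^{-1/2}$ from Fr\k{a}czyk's bound combine with the formal-degree normalization to leave exactly $|\Delta(\gamma)|_F^{-1}$, and the counting leaves $2^{\dim\G-\rk\G}|W|$.

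The main obstacle will be step three: obtaining the \emph{explicit} constant $2^{\dim\G-\rk\G}|W|$ rather than some unspecified $\kappa_\gamma$ requires a clean, uniform count of the $G$-conjugacy orbits of elliptic regular elements inside $J$ and a matching clean form of the character bound — one cannot afford any slack. I would handle this by working with a fixed torus $T$ through $\gamma$, parametrizing conjugates of $\gamma$ in $J$ by $T(F)\backslash$(something)$/J$-type data, bounding the resulting index by the order of the component group of a centralizer (at most $2^{\text{(number of roots)}}$), and invoking the sharpest version of \cite{Fraczyk}'s character inequality, where the constant is explicit. The regular-but-non-elliptic case of Theorem~\ref{thmmain} then follows by a reduction to a Levi subgroup, but for the elliptic case this direct route gives the stated bound.
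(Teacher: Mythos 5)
There is a genuine gap, and it sits exactly where you flagged it: step three. Your route (Frobenius/Arthur character formula for $\cInd_J^G\Lambda$, then a per-term character bound, then a count of contributing cosets) is essentially the paper's proof of Theorem \ref{thmmain}, and it can only ever produce a constant of the form $|\Delta(\gamma)|_F^{-1}\,|\Omega(K)/K|\cdot 2^{\dim\G-{\rm rk}\,\G}|W|$: even with the sharpest form of Fr\k{a}czyk's inequality applied to each conjugate $g^{-1}\gamma g\in K$, the coset count multiplies the bound. That count is \emph{not} controlled by $2^{\dim\G-{\rm rk}\,\G}$, nor by any Galois-cohomological or component-group invariant of the centralizer (which, for regular semisimple $\gamma$, is a torus, so there is no component group to speak of). Concretely, $|\Omega_\gamma(K)/K|$ is essentially the number of vertices of a given type in the Bruhat--Tits building fixed by $\gamma$, and this blows up as $\gamma$ approaches the singular set (e.g.\ for ${\rm SL}_2$, an elliptic element close to $\pm 1$ fixes a large ball in the tree). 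So no uniform count of the kind you hope for exists, and the approach cannot reach the stated explicit constant. A secondary issue: the inequality you attribute to \cite{Fraczyk} is misremembered --- the bound used in the paper (Proposition \ref{propositionFraczyk}) is $|\chi_\rho(\gamma)|\leqslant|\Delta(\gamma)|_F^{-1}2^{\dim\H-{\rm rk}(\H)}|W|$, with no $\dim\Lambda$ factor and exponent $-1$, not $\dim(\Lambda)|\Delta|_F^{-1/2}$; there is no formal-degree cancellation of the sort you invoke.

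The paper's actual proof of Theorem \ref{thmellipticexplicit} avoids counting conjugates altogether and instead adapts the \emph{method} of \cite{Fraczyk} rather than quoting its statement. It uses Kazhdan's orthogonality relations (Lemma \ref{lemmaorth}) to get the $L^2$ bound $\int_{\mathbf{T}_0(F)\cap\G^{rs}(F)}|\Delta(t)|_F|\theta_\pi(t)|^2dt\leqslant|W|$ on the elliptic torus through $\gamma$; it builds an auxiliary function $\alpha(t)=\prod_j(1-\kappa(\alpha_j(\lambda_j(t))))$ out of unitary characters chosen by Pontryagin duality so that $\alpha$ vanishes wherever some $|1-\lambda_j(t)|_F<|1-\lambda_j(\gamma)|_F$, which converts the weight $|\Delta(t)|_F$ into $|\Delta(\gamma)|_F$ at the cost of a factor $2^{2m}$, $m=\tfrac12(\dim\G-{\rm rk}\,\G)$; and it uses Lemma \ref{lemmasumofcharacters} (this is where compact induction, Proposition \ref{propositionomegacompact} and Arthur's formula enter) to show $\theta_\pi$ restricted to the relevant subset of $\mathbf{T}_0(F)$ is a finite \emph{integral} combination of unitary characters, so the integrality trick $\sum_\varphi|a_\varphi|\leqslant\sum_\varphi a_\varphi^2$ turns the $L^2$ bound into the pointwise bound $|\theta_\pi(\gamma)|\leqslant 2^{\dim\G-{\rm rk}\,\G}|\Delta(\gamma)|_F^{-1}|W|$. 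If you want to salvage your outline, you would need to replace your step three by some such cancellation/orthogonality mechanism; a pure per-coset estimate cannot give the clean constant.
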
 . 
\subsection{Notation}
Let $F$ be a local non-Archimedean field. Denote by $\mathcal{O}_{F}$ its ring of integers and by $k_F$ its residue field. Write $\mathfrak{p}_{F}$ for the maximal ideal of $\O $. Denote by $q$ the cardinality of $k_{F}$. Fix a uniformizer $\varpi _{F}$ in $F$. Denote by $v_{F}$ the associated additive valuation normalized as $v_{F}(x)=n$ for $x=u\varpi _{F}^{n}\in F$ with $u$ an inveritble element in $\O $. Denote by $| \cdot |_{F}$ the corresponding absolute value $|x|_{F}=q^{-v_{F}(x)}$. For a connected reductive group $\G$ we write ${\rm rk}\, \G$ for the absolute rank of $\G$
and $\Delta (\gamma)$ denotes the Weyl discriminant of a semisimple element $\gamma$. 

Fix a connected semisimple group $\G$ over a non-Archimedean local field $F$ of characteristic $0$. We denote $G:=\G (F)$. Denote by $Z$ the center of $G$.
We fix a Haar measure $\mu $ on $G$ such that $\mu (\G(\O))=1$. Write $Z(H)$ for the center of a group $H$. We denote by $|S|$ the cardinality of a set $S$. If $\gamma $ is a regular semisimple element in $\G$, then we denote by $A_{M}(\gamma)$ the maximal split component of center of the Levi subgroup $M$ of a rational parabolic subgroup in $\G$ containing $\gamma$ and minimal with this property. Denote by $A_{\G}$ the maximal split component of the center of $\G$. 
\subsection{Outline of the proof of Theorem \ref{thmmain}}
In section \ref{sectioncompactness} we define
\begin{equation*}
\Omega (K):=\{ g\in A_{M}(\gamma)\setminus G|\ \ g^{-1}\gamma g\in K\}.
\end{equation*}The goal of section \ref{sectioncompactness} is to show that the set $\Omega (K)$ is compact (Proposition \ref{propositionomegacompact}). Since $\G$ is semisimple and $\pi$ is compactly induced we can write 
\begin{equation*} 
\pi \cong \cInd _{J}^{G}\Lambda =\cInd _{K}^{G}\rho. 
\end{equation*}
for some maximal compact open subgroup $K$ of $G$ and its irreducible representation $\rho$. 
Write $\chi _{\rho}$ for the trace of $\rho$ extended to $\G$ by $0$ outside of $K$.
Using Arthur's formula \cite{Arthur1987} we show  
\begin{equation}\label{equationtrace2} 
|\theta_{\pi}(\gamma)|=\left|\int _{\Omega(K)}\chi _{\rho}(g^{-1}\gamma g)v_{M}(g)dg\right|.
\end{equation} 

Once we know that $\Omega (K)$ is compact the existence of an upper bound is deduced from (\ref{equationtrace2}), the fact that $v_{M}$ is continuous and from the work of Fr\k aczyk \cite{Fraczyk} on upper bounds of irreducible characters of compact $p$-adic groups (Proposition \ref{propositionFraczyk}). 

\subsection{Acknowledgement} The author is grateful to Anne-Marie Aubert, Miko{\l}aj Fr\k aczyk, Sug Woo Shin and Bertrand R\'{e}my for helpful discussions. 

\section{Compactness}\label{sectioncompactness}
For a maximal compact open subgroup $K$ of $G$ we define 
\begin{align*} 
\Omega (K):=\{g\in A_{M}(\gamma )\setminus G | \ \  g^{-1}\gamma g \in \ K \}.  
\end{align*} 
One of the  main ingredients of the proof of Theorem \ref{thmmain} is the following.

\begin{proposition}\label{propositionomegacompact}
For any maximal compact open subgroup $K$ of $G$ the set $\Omega (K)$ is compact.
\end{proposition}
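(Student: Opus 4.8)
The plan is to realise $\Omega(K)$ as the preimage of a compact set under a proper map. Write $G_{\gamma}:=Z_{G}(\gamma)$ for the centraliser of $\gamma$ in $G$. Since $\gamma$ is regular semisimple and $F$ has characteristic zero, $Z_{\G}(\gamma)$ is a maximal $F$-torus $\mathbf{T}$ of $\G$, so $G_{\gamma}=\mathbf{T}(F)$. By minimality, $M$ is the Levi $Z_{\G}(\mathbf{A})$, where $\mathbf{A}$ is the maximal $F$-split subtorus of $\mathbf{T}$, and $A_{M}(\gamma)=\mathbf{A}(F)$; in particular $A_{M}(\gamma)\subseteq G_{\gamma}$. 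The quotient torus $\mathbf{T}/\mathbf{A}$ has no nontrivial split subtorus, hence is anisotropic over $F$, hence $(\mathbf{T}/\mathbf{A})(F)$ is compact; it follows that $G_{\gamma}/A_{M}(\gamma)$, which embeds as an open subgroup of $(\mathbf{T}/\mathbf{A})(F)$, is compact.

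Next I would pass to the quotient by $G_{\gamma}$ rather than by $A_{M}(\gamma)$. The condition $g^{-1}\gamma g\in K$ is invariant under left translation of $g$ by $G_{\gamma}$, so it defines a subset $\Omega'(K):=\{\,G_{\gamma}g\in G_{\gamma}\backslash G : g^{-1}\gamma g\in K\,\}$, and $\Omega(K)$ is precisely the preimage of $\Omega'(K)$ under the natural projection $p\colon A_{M}(\gamma)\backslash G\to G_{\gamma}\backslash G$. Every fibre of $p$ is a translate of $A_{M}(\gamma)\backslash G_{\gamma}\cong G_{\gamma}/A_{M}(\gamma)$, which is compact by the previous step; since $p$ is an open continuous surjection of locally compact Hausdorff spaces with compact fibres, it is proper, so it suffices to prove that $\Omega'(K)$ is compact.

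Finally I would use that the conjugacy class $C_{\gamma}:=\{\,g^{-1}\gamma g : g\in G\,\}$ is closed in $G$ — the standard fact that the $\G(F)$-conjugacy class of a semisimple element is closed — and that the orbit map $o\colon G_{\gamma}\backslash G\to C_{\gamma}$, $G_{\gamma}g\mapsto g^{-1}\gamma g$, is a homeomorphism (it is a continuous bijection onto its image, and a homeomorphism because $G\to G_{\gamma}\backslash G$ admits local continuous sections, $G_{\gamma}\backslash G$ being a $p$-adic analytic manifold on which $o$ is an immersion). Then $\Omega'(K)=o^{-1}(K\cap C_{\gamma})$, and since $K$ is compact and $C_{\gamma}$ is closed, $K\cap C_{\gamma}$ is compact; hence $\Omega'(K)$ is compact, and therefore so is $\Omega(K)$.

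The genuinely non-formal input, and the step I expect to require the most care, is the closedness of $C_{\gamma}$ in $G$ together with the assertion that $o$ is a homeomorphism onto it — equivalently, the properness of the orbit map $G_{\gamma}\backslash G\to G$ that also underlies the convergence of semisimple orbital integrals. One can establish it by combining the Zariski-closedness of the geometric semisimple conjugacy class of $\gamma$ (valid since $\mathrm{char}\,F=0$), which makes its set of $F$-points closed in $G$, with the fact that each $\G(F)$-orbit inside that set is open — hence also closed — by submersivity of the conjugation map; alternatively one may simply invoke the convergence of the orbital integral $O_{\gamma}$ on $\mathcal{C}_{c}^{\infty}(G)$, whose proof amounts to exactly this properness statement. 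Everything else is soft point-set topology together with the structure theory of tori over local fields.
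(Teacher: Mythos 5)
Your proposal is correct in substance, but it takes a genuinely different route from the paper. The paper argues by cases: for $\gamma$ elliptic, compactness is Lemma \ref{lemmaSisfinite}, proved via the action on the Bruhat--Tits building and its Satake--Berkovich compactification (a stabilizer of a boundary point lies in a proper parabolic, contradicting ellipticity); for $\gamma$ non-elliptic, it combines the Iwasawa decomposition $G=LNK$ with Lemma \ref{lemmaSisfinite} applied to the Levi and with the explicit root-group computations of Lemmas \ref{lemmaconjugationbyn} and \ref{lemmaNgammacompact}, which show that letting the unipotent part escape to infinity forces $n^{-1}l^{-1}\gamma l n$ out of $K$. You instead deduce everything from two standard inputs: cocompactness of $\mathbf{A}(F)$ in $\mathbf{T}(F)$, and properness of the orbit map, i.e.\ closedness of the regular semisimple conjugacy class $C_{\gamma}$ in $G$ together with the homeomorphism $G_{\gamma}\backslash G\cong C_{\gamma}$ --- exactly the statement underlying convergence of regular semisimple orbital integrals. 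This is shorter and more conceptual, at the price of importing that (true, classical) fact, whereas the paper's building/root-group argument is self-contained. Two small repairs are needed in your write-up: (i) for $\G$ semisimple but not simply connected, $Z_{\G}(\gamma)$ need not be a torus --- it contains the maximal torus $\mathbf{T}$ through $\gamma$ only with finite index --- but this is harmless, since $G_{\gamma}/A_{M}(\gamma)$ is still compact and the orbit-map step works with the full centralizer; (ii) the principle ``an open continuous surjection of locally compact Hausdorff spaces with compact fibres is proper'' is false in general (e.g.\ the obvious map $[0,1]\sqcup(0,1]\to[0,1]$), so justify properness of $p\colon A_{M}(\gamma)\backslash G\to G_{\gamma}\backslash G$ instead by noting that $A_{M}(\gamma)=\mathbf{A}(F)$ is normal in $G_{\gamma}$, so $p$ is the quotient by the left action of the compact group $G_{\gamma}/A_{M}(\gamma)$, hence a closed map with compact fibres, hence proper.
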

Before the proof we establish some auxiliary results. Recall that a regular semisimple element $\gamma \in G$ is called elliptic if $\gamma$ does not belong to any proper parabolic subgroup of $G$.

\begin{lemma}\label{lemmaSisfinite} 
Let $\H$ be a reductive group over a non-Archimedean local field $F$. Denote $H:=\H (F)$. Let $\gamma$ be an elliptic element in $H$. Let $K_{H}$ be a maximal compact subgroup of $H$. Then, the set 
\begin{equation*} 
\{ h\in Z(H)\setminus H| \ \ h^{-1}\gamma h\in K_{H}\} 
\end{equation*} is compact. 
\end{lemma}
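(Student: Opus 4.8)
The plan is to show the set $S:=\{h\in Z(H)\backslash H : h^{-1}\gamma h\in K_H\}$ is compact by exhibiting a proper continuous function on $Z(H)\backslash H$ that is bounded on $S$, or equivalently by showing $S$ is closed and that its preimage in $H$ is contained in a set of the form $Z(H)\cdot C$ for a compact $C$. Closedness is immediate: the map $h\mapsto h^{-1}\gamma h$ is continuous on $H$, $K_H$ is closed, and the condition is $Z(H)$-invariant (since $Z(H)$ centralizes $\gamma$), so $S$ is closed in $Z(H)\backslash H$. The substance is the boundedness, and the natural tool is the building: I would let $H$ act on its (enlarged) Bruhat--Tits building $\mathcal{B}(H)$, fix a point $x_0$ stabilized by $K_H$ (so $K_H\subseteq \Stab(x_0)$ up to finite index / the $K_H$-orbit of $x_0$ is bounded), and translate the condition $h^{-1}\gamma h\in K_H$ into: $h^{-1}\gamma h$ moves $x_0$ a bounded amount, i.e. $\gamma$ moves $h\cdot x_0$ a bounded amount. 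So it suffices to prove that $\{y\in \mathcal{B}(H) : d(\gamma\cdot y, y)\leqslant R\}$ has bounded image in $Z(H)\backslash\mathcal{B}(H)$ for every $R$; here one must quotient by $Z(H)$ because $\gamma$ fixes the (split part of the) center's action only trivially, but since $H$ is reductive the building is really the building of the derived/semisimple part plus a Euclidean factor for $A_H$, and $Z(H)$ acts by translations on that factor, so working modulo $Z(H)$ we land in the building of the semisimple quotient where $A_H$ is killed.

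The heart of the matter is therefore a displacement estimate: an elliptic regular semisimple $\gamma$ has a bounded ``almost-fixed-point set'' in the semisimple building. I would argue as follows. Ellipticity of $\gamma$ means the centralizer $T=\Cent_H(\gamma)^\circ$ is an anisotropic-modulo-center torus, hence $T(F)/Z(H)$ is compact. The translation length (minimal displacement) of $\gamma$ on the building is zero precisely when $\gamma$ lies in a compact-mod-center subgroup, which it does since $T(F)/Z(H)$ is compact and $\gamma\in T(F)$; more to the point, the set $\Min(\gamma)$ of points with minimal displacement is nonempty and $\gamma$-stable, and since $\gamma$ is elliptic $\Min(\gamma)$ is itself bounded modulo $Z(H)$ (it is acted on by the compact-mod-center group $T(F)$ with bounded orbits, and one checks $\Min(\gamma)$ is contained in a single $T(F)$-orbit of a bounded set using that $T$ is the full centralizer — this is where regularity is used). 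For points $y$ \emph{not} in $\Min(\gamma)$, I would use convexity of the displacement function $y\mapsto d(\gamma y, y)$ on the CAT(0) space $\mathcal{B}(H)$: if $y_0\in\Min(\gamma)$ realizes the minimum $\ell\geqslant 0$ and $y$ has displacement $\leqslant R$, then along the geodesic from $y_0$ to $y$ the displacement grows, and in a Bruhat--Tits building (a union of flats) one gets that $\gamma$ restricted to a flat containing a near-minimal point acts as a Euclidean isometry whose rotational part, being elliptic of finite order in the affine Weyl setting, forces the displacement to grow linearly in $d(y_0,y)$ with a definite slope — hence $d(y_0,y)$ is bounded by roughly $R/(\text{slope})$, giving boundedness of $\{d(\gamma y,y)\leqslant R\}$ modulo $Z(H)$. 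The cleanest way to package the ``definite slope'' is: the linear (vector) part of $\gamma$ acting on each apartment $\mathbb{A}\cong A_H\otimes\R \oplus V_{ss}$ fixes the $A_H$-factor and acts with no nonzero fixed vectors on $V_{ss}$ by ellipticity+regularity, so $\mathrm{id}-\gamma_{\mathrm{lin}}$ is invertible on $V_{ss}$ and $\|(\mathrm{id}-\gamma_{\mathrm{lin}})v\|\geqslant c\|v\|$ uniformly, which is exactly the linear growth statement after projecting to $V_{ss}=Z(H)\backslash\mathbb{A}$.

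Assembling: given $h\in H$ with $h^{-1}\gamma h\in K_H$, the point $y:=h\cdot x_0$ satisfies $d(\gamma y,y)=d(\gamma h x_0, hx_0)=d(h(h^{-1}\gamma h)x_0, hx_0)=d((h^{-1}\gamma h)x_0,x_0)\leqslant R$ where $R:=\mathrm{diam}(K_H\cdot x_0)<\infty$. By the displacement estimate, $y$ lies within bounded distance (mod $Z(H)$) of $\Min(\gamma)$, which is itself bounded mod $Z(H)$; so $h\cdot x_0$ ranges over a bounded subset of $Z(H)\backslash\mathcal{B}(H)$. Since the stabilizer of $x_0$ in $H$ is compact (it contains $K_H$ with finite index, or one simply picks $x_0$ a special vertex) and the $H$-action on the building is proper modulo $Z(H)$ (properness of the action of the semisimple quotient), the set of such $h$ is compact in $Z(H)\backslash H$. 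Combined with the already-established closedness, $S$ is compact. The main obstacle, and the step deserving the most care, is the displacement estimate and in particular making the ``bounded modulo $Z(H)$'' bookkeeping rigorous in the reductive (non-semisimple) case — ensuring that the Euclidean factor coming from $A_H$, on which $\gamma$ may act nontrivially by a translation coming from the center, is exactly the part being quotiented out, so that ellipticity genuinely gives a positive spectral gap for $\mathrm{id}-\gamma_{\mathrm{lin}}$ on the complementary subspace; an alternative, perhaps cleaner, route avoiding buildings is to reduce directly to a known structure result (e.g. that the centralizer being compact-mod-center together with the Cartan/Iwasawa decomposition forces $h$ into $Z(H)\cdot K_H\cdot(\text{bounded})$), but I expect the building argument to be the most transparent.
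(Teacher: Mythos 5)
Your reduction steps are fine and in fact parallel the paper's setup: pass to the Bruhat--Tits building, pick a point $x_{0}$ stabilized by $K_{H}$, observe that $h^{-1}\gamma h\in K_{H}$ forces $\gamma$ to move (indeed fix, in the paper's version) the point $h\cdot x_{0}$, and use properness of the action to convert boundedness in the building into compactness in $Z(H)\backslash H$. The genuine gap is in the step you yourself flag as the heart of the matter, the displacement estimate, and neither of your two justifications for it holds up. First, the claim that $\mathrm{Min}(\gamma)$ is bounded modulo $Z(H)$ ``because it is acted on by the compact-mod-center group $T(F)$'' proves nothing: a set invariant under a compact(-mod-center) group need not be bounded, and boundedness of the fixed-point set of $\gamma$ is essentially the statement being proved (it is the case $R=0$ of your almost-fixed-point claim), so you cannot assume it. Second, the ``definite slope'' argument is not valid as stated: $\gamma$ does not stabilize any apartment (its maximal torus is anisotropic mod center, so it corresponds to no apartment), hence ``the linear part of $\gamma$ on a flat'' is not defined; moreover a regular elliptic element can fix an arbitrarily large ball pointwise (e.g.\ an elliptic regular element of $\SL_{2}(\O)$ congruent to the identity modulo a high power of $\varpi_{F}$), so there is no uniform positive lower bound on displacement growth coming from invertibility of $\mathrm{id}-\gamma_{\mathrm{lin}}$ on $V_{ss}$. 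Convexity of the displacement function does give linear growth along each geodesic ray issuing from the min-set, but with a slope depending on the ray, and the uniformity over directions is exactly the missing content --- it is not a soft CAT(0) fact.

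The paper closes precisely this hole with a compactness-at-infinity argument: if the set were noncompact, one takes $h_{l}$ escaping to infinity, so the fixed points $h_{l}x$ of $\gamma$ escape to infinity in the building, passes to the Satake--Berkovich compactification to extract a boundary limit point $y$ fixed by $\gamma$, and invokes the theorem that stabilizers of boundary points lie in proper parabolic subgroups, contradicting ellipticity (which the paper defines exactly as ``lies in no proper parabolic''). Note also that you silently replace that definition by ``centralizer compact mod center''; the two are equivalent for regular semisimple elements, but the equivalence deserves a word if you use it. Your closing aside --- arguing instead via the Cartan decomposition, where conjugation by $a_{l}\to\infty$ in a Weyl chamber contracts a convergent sequence into a proper parabolic --- is indeed a viable alternative route to the same contradiction, but as written it is a one-line remark, not a proof; either that argument or a boundary/compactification argument must be supplied to make your displacement estimate (and hence the lemma) go through.
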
 
\begin{proof}
For the sake of contradiction assume that there exists a sequence $h_{l}\in H$, $l\in\N$ which escapes to infinity and such that $h_{l}^{-1}\gamma h_{l}\in K_{H}$. Let $\mathcal{B}(\H, F)$ be the Bruhat-Tits building of $\H$. Every maximal compact subgroup of $H$ stabilizes some vertex in $\mathcal{B}(\H,F)$. Let $x$ be a point in $\mathcal{B}(\H,F)$ which is stabilized by $K_{H}$. Then,
\begin{equation*} 
\gamma h_{l} x=h_{l}x.
\end{equation*} 
Since the action of $\H$ on its building is proper, the sequence $(h_{l}x)_{l}$ escapes to infinity. Consider the Satake-Berkovich compactification $\overline{\mathcal{B}(\H,F)}$ of $\mathcal{B}(\H,F)$ (see \cite{R1}, \cite{R2},\cite{Berkovich}). Passing to a subsequence we can assume that the sequence $(h_{l}x)_{l}$ tends to a point in the boundary of $\overline{\mathcal{B}(\H,F)}$, say $y$. Since the action of $\H$ on its building is continuous, the element $\gamma $ stabilizes the point $y$. The stabilizer of a point in the boundary of $\overline{\mathcal{B}(\H,F)}$ is contained in a proper parabolic subgroup of $H$ (see e.g. \cite[Theorem 14.11.]{R1}), so $\gamma$ is contained in a proper parabolic subgroup of $H$. This leads to a contradiction with the assumption that $\gamma $ is elliptic.  
\end{proof}
 
Recall that for $\G$ connected and an $F$-torus $T\subseteq \G$ we can define a root subgroup in the following way. A subgroup $U\subseteq \G$ is called a root subgroup (with respect to $T$) if there exists an isomorphism $u\colon\G _{a}\to U $ and a root $\alpha $ of $\G$ with respect to $T$ such that  \begin{equation*} tu(x)t^{-1}=u(\alpha (t)x),
\end{equation*}
for every $x\in \G_{a}$, $t\in T$. 
If $T$ is split then the root $\alpha $ determines the root subgroup uniquely. We denote by $U_{\alpha}$ the root subgroup associated to $\alpha$. 
\begin{lemma} \label{lemmaconjugationbyn}
Let $\G$ be a split connected reductive group over a non-Archimedean local field $F$. Assume that the maximal torus $T$ containing $\gamma$ is split over $F$. Let $P$ be a minimal parabolic subgroup of $\G(F)$ containing $\gamma$.  Denote by $N$ its unipotent radical. Let $\Delta ^{+}$ be the set of positive roots of $T$ associated to $P$ with order $\preceq$. Write $\Delta ^{+}=\{ \alpha _{1},\ldots ,\alpha_{r}\}$ with $\alpha_{i}\preceq\alpha_{j}$ for every $i<j$.  Denote by $U_{\alpha}$ the root subgroup associated to a positive root $\alpha$. Then,
\begin{enumerate} 
\item $u_{\alpha}(x)^{-1}\gamma u_{\alpha}(x)= u_{\alpha }\left( (\alpha (\gamma)-1)x\right)\gamma$ for any $u_{\alpha}(x)\in U_{\alpha}$;
\item Let $m=\dim N$ and $n=u_{\alpha _{
1}}(x_{1}) \ldots u_{\alpha _{m}}(x_{m})$. Then,
\begin{equation*} 
n^{-1}\gamma n=\prod_{i=0}^{m-1}u_{\alpha _{m-i}}\left( (\alpha_{m-i}(\gamma)-1)x_{m-i}+W^{m}_{m-i}(\alpha_{1}(\gamma),\ldots ,\alpha_{m}(\gamma),x_{1},\ldots ,x_{m})\right) \gamma 
\end{equation*} 
where $W^{m}_{m-i}$ is a polynomial in $2m$ variables with coefficients in $F$, depending only on $i$, $m-i$ and $\alpha_{j}(\gamma)$, $x_{j}$ such that $\alpha_{j}\prec \alpha _{i}$.  


\end{enumerate}
\end{lemma}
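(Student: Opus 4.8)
The statement has two parts; the first is a one-line computation which feeds the induction used for the second.

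\emph{Part (1).} Since $\gamma\in T$, the defining relation of a root subgroup gives $\gamma u_{\alpha}(x)\gamma^{-1}=u_{\alpha}(\alpha(\gamma)x)$, and since $u_{\alpha}\colon\G_{a}\to U_{\alpha}$ is an isomorphism of algebraic groups we have $u_{\alpha}(s)u_{\alpha}(t)=u_{\alpha}(s+t)$ and $u_{\alpha}(x)^{-1}=u_{\alpha}(-x)$. Therefore
\[
u_{\alpha}(x)^{-1}\gamma u_{\alpha}(x)=u_{\alpha}(-x)\bigl(\gamma u_{\alpha}(x)\gamma^{-1}\bigr)\gamma=u_{\alpha}(-x)u_{\alpha}(\alpha(\gamma)x)\gamma=u_{\alpha}\bigl((\alpha(\gamma)-1)x\bigr)\gamma .
\]

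\emph{Part (2).} I would argue by induction on $m$, peeling off the lowest factor $u_{\alpha_{1}}(x_{1})$. The structural input is that the enumeration $\alpha_{1},\dots,\alpha_{m}$ is compatible with the height function (this is what the order $\preceq$ provides): if $\alpha_{i}+\beta\in\Delta^{+}$ for some $\beta\in\Delta^{+}$, then $\mathrm{ht}(\alpha_{i}+\beta)>\mathrm{ht}(\alpha_{i})$, so $\alpha_{i}+\beta$ has index strictly larger than $i$. Hence, for every $k$, the tail $N_{\ge k}:=\langle U_{\alpha_{k}},\dots,U_{\alpha_{m}}\rangle$ is closed (indeed an ideal in $\Delta^{+}$), so it equals the product set $U_{\alpha_{k}}\cdots U_{\alpha_{m}}$ and is a normal subgroup of $N$; in particular $N=U_{\alpha_{1}}\ltimes N_{\ge 2}$ and $\hat n:=u_{\alpha_{2}}(x_{2})\cdots u_{\alpha_{m}}(x_{m})\in N_{\ge 2}$. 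Writing $b:=(\alpha_{1}(\gamma)-1)x_{1}$ and using Part (1),
\[
n^{-1}\gamma n=\hat n^{-1}u_{\alpha_{1}}(x_{1})^{-1}\gamma u_{\alpha_{1}}(x_{1})\hat n=\hat n^{-1}u_{\alpha_{1}}(b)\,(\gamma\hat n\gamma^{-1})\,\gamma ,
\]
where $\gamma\hat n\gamma^{-1}=u_{\alpha_{2}}(\alpha_{2}(\gamma)x_{2})\cdots u_{\alpha_{m}}(\alpha_{m}(\gamma)x_{m})\in N_{\ge 2}$. Using normality of $N_{\ge 2}$ to write $u_{\alpha_{1}}(b)(\gamma\hat n\gamma^{-1})=\bigl(u_{\alpha_{1}}(b)(\gamma\hat n\gamma^{-1})u_{\alpha_{1}}(-b)\bigr)\,u_{\alpha_{1}}(b)$, the uniqueness of the decomposition $N=N_{\ge 2}\cdot U_{\alpha_{1}}$ identifies the $U_{\alpha_{1}}$-coordinate of $n^{-1}\gamma n\gamma^{-1}$ as $b$, i.e. $z_{1}=(\alpha_{1}(\gamma)-1)x_{1}$ with $W^{m}_{1}=0$ (there is no root $\prec\alpha_{1}$). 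The remaining $N_{\ge 2}$-part is $\hat n^{-1}\gamma\hat n\gamma^{-1}$, slightly conjugated by $u_{\alpha_{1}}(b)$: the factor $\hat n^{-1}\gamma\hat n\gamma^{-1}$ is put into the desired reverse-order normal form $u_{\alpha_{m}}(\cdot)\cdots u_{\alpha_{2}}(\cdot)$ by the inductive hypothesis applied to $(N_{\ge 2},\{U_{\alpha_{2}},\dots,U_{\alpha_{m}}\})$, and the extra conjugation by $u_{\alpha_{1}}(b)$ is expanded through the Chevalley commutator relations.

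The dependence claim for the correction polynomials is governed entirely by the Chevalley commutator formula: $[u_{\alpha_{l}}(s),u_{\alpha_{l'}}(s')]$ is a product of factors $u_{i\alpha_{l}+j\alpha_{l'}}(c\,s^{i}{s'}^{j})$ over $i,j\ge 1$ with $i\alpha_{l}+j\alpha_{l'}\in\Delta^{+}$, and each such root has height, hence index, strictly larger than those of $\alpha_{l}$ and $\alpha_{l'}$. Thus, as one commutes the factors produced above into the normal form $u_{\alpha_{m}}(z_{m})\cdots u_{\alpha_{1}}(z_{1})$, every term inserted into the $U_{\alpha_{m-i}}$-slot is a polynomial in the $\alpha_{j}(\gamma)$ and $x_{j}$ for roots $\alpha_{j}\prec\alpha_{m-i}$ only; combined with the inductive hypothesis this yields the asserted form of $W^{m}_{m-i}$. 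The main obstacle is the bookkeeping rather than any single hard step: one should phrase the induction for an abstract unipotent $F$-group carrying a $T$-stable filtration whose successive quotients are the root subgroups $U_{\alpha_{j}}$ (so that the inductive hypothesis genuinely applies to $N_{\ge 2}$, which need not itself be the unipotent radical of a parabolic), and then carefully order the finitely many factors produced by iterated use of the commutator formula while tracking which variables can appear — this is where the height-compatibility of the enumeration is used repeatedly.
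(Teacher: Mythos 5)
Your proof is correct, and it uses the same essential ingredients as the paper — part (1), the Chevalley commutator formula, and an induction over root-group factors — but the induction is organized differently. The paper fixes the ambient unipotent radical $N$ and inducts on the number $r$ of ``switched-on'' coordinates, adding the factor $u_{\alpha_{l+1}}(x_{l+1})$ on the right of $n_{1}$ (with the remaining variables set to zero) and restoring the normal form $\prod_{i}u_{\alpha_{m-i}}(\cdot)\,\gamma$ by repeated use of $\gamma u_{\alpha}(x)\gamma^{-1}=u_{\alpha}(\alpha(\gamma)x)$ and the commutator relations; this has the advantage that the statement being inducted on never leaves the hypotheses of the lemma, so no generalization is needed. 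You instead induct on $m$ itself, peeling off the lowest factor $u_{\alpha_{1}}(x_{1})$ and exploiting that the tail $N_{\ge 2}$ is a normal complement, which buys you a clean, exact identification of the $U_{\alpha_{1}}$-coordinate as $(\alpha_{1}(\gamma)-1)x_{1}$ (so $W^{m}_{1}=0$) via uniqueness of the decomposition $N=N_{\ge 2}\cdot U_{\alpha_{1}}$; the price, which you correctly flag, is that the inductive hypothesis must be restated for $N_{\ge 2}$ (a $T$-stable unipotent subgroup that is not itself the unipotent radical of a minimal parabolic containing $\gamma$), so the lemma has to be phrased for such filtered unipotent groups before the induction runs. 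The tracking of which variables enter the correction polynomials is handled identically in both arguments: every commutator term $u_{i\alpha+j\beta}(c\,x^{i}y^{j})$ lands in a slot strictly higher than $\alpha$ and $\beta$ in the height-compatible order $\preceq$, and both proofs (yours explicitly, the paper's implicitly) leave the final normal-ordering bookkeeping at the same level of detail.
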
 
\begin{proof} 
$\mathbf{(1)}$ Follows from the formula
\begin{equation}\label{equation1} 
\gamma u_{\alpha}(x)\gamma^{-1}= u_{\alpha}(\alpha (\gamma )x).
\end{equation}
$\mathbf{(2)}$ We prove the following statement. 
\newline Let $1\leqslant r \leqslant m$ and $n_{1}=u_{\alpha_{1}}(x_{1})\ldots u_{\alpha _{r}}(x_{r})$. Then,
\begin{equation*} 
n_{1}^{-1} \gamma n_{1}=\prod _{i=0}^{m-1}u_{\alpha _{m-i}}((\alpha _{m-i}(\gamma)-1)x_{m-i}+W_{m-i}^{r}(\alpha_{1}(\gamma),\ldots , \alpha _{m}(\gamma),x_{1},\ldots  ,x_{m}))\gamma
\end{equation*}
where $W_{m-i}^{r}$ is a polynomial in $2m$ variables with coefficients in $F$, depending only on $r,i,m$ and $\alpha _{j}(\gamma),x_{j}$ such that $\alpha _{j} \prec \alpha _{i}$ and we put $x_{s}=0$ for $r+1 \leqslant s\leqslant m$. 

Proof is by induction on $r$. For $r=1$ it follows from $(1)$. Assume now that the formula is true for $r=l$ and we want to prove it for $r=l+1$. By \cite[Proposition 1.2.3.]{Adler}, for any $\alpha,\beta \in \Delta ^{+}$ we have 
\begin{equation}\label{equation2} 
[u_{\alpha}(x),u_{\beta}(y)]=\prod _{i,j>0}u_{i\alpha+j\beta}(c_{i,j;\alpha ,\beta}xy)
\end{equation}
where $[g,h]$ denotes the commutator of elements $g,h$ and $c_{i,j;\alpha,\beta}$ are integers dependent only on $\alpha$, $\beta$ and $i,j$ and $u_{i\alpha i+j\beta} $ denotes the trivial map if $i\alpha +j\beta $ is not a root. Denote $n_{1}:=u_{\alpha _{1}}(x_{1})\ldots u_{\alpha _{l}}(x_{l})$ and $n:=n_{1}u_{\alpha _{l+1}}(x_{l+1})$. Write $\alpha (\gamma):=(\alpha _{1}(\gamma),\ldots , \alpha_{m}(\gamma))$, $y:=(x_{1}, \ldots , x_{l},0,\ldots ,0)=:(y_{1},\ldots  ,y_{m})$ and $z=(x_{1},\ldots , x_{l+1},0\ldots ,0)=:(z_{1}, \ldots ,z_{m})$. By (\ref{equation1}), repeated use of (\ref{equation2}) and the inductive step,
\begin{align*} 
&n^{-1}\gamma n=u_{\alpha_{l+1}}(z_{l+1})^{-1}n_{1}^{-1}\gamma n_{1}u_{\alpha_{l+1}}(z_{l+1})=\\
&u_{\alpha_{l+1}}(-z_{l+1})\prod_{i=0}^{m-1}u_{\alpha_{m-i}}\left( (\alpha_{m-i}(\gamma)-1)y_{m-i}+W_{m-i}^{l} (\alpha (\gamma),y)\right)\gamma u_{\alpha_{l+1}}(z_{l+1})=\\
&u_{\alpha_{l+1}}(-z_{l+1})\prod_{i=0}^{m-1}u_{\alpha_{m-i}}\left( (\alpha_{m-i}(\gamma)-1)y_{m-i}+W_{m-i}^{l}(\alpha (\gamma),y)\right) u_{\alpha_{l+1}}(\alpha_{l+1}(\gamma)z_{l+1})\gamma = \\
&=\prod_{i=0}^{m-1}u_{\alpha_{m-i}}\left( (\alpha _{m-i}(\gamma)-1)z_{m-i}+W_{m-i}^{l+1}(\alpha (\gamma) , z)\right
)\gamma. 
\end{align*}
\end{proof}
\begin{lemma} \label{lemmaNgammacompact}
Let $\gamma$ be a non-elliptic element. Let $Q$ be a minimal proper parabolic containing $\gamma$. Denote by $N_{Q}$ the unipotent radical of $Q$ and by $L$ the Levi subgroup of $Q$ containing $\gamma$. Then, for any compact subset $\Omega $ of $L$ there exists a compact subset $E_{\Omega}\subseteq N_{Q}$ such that $x^{-1}l^{-1}\gamma l x \not\in K$ for every $x\in N_{Q}\setminus E_{\Omega}$ and every $l\in \Omega$.
\end{lemma}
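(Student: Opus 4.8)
The plan is to first render the dependence on $l$ harmless by conjugating $l$ out of the expression, and then to deduce the remaining $l$‑free statement from the triangular normal form of Lemma~\ref{lemmaconjugationbyn}. Note that in the situation at hand $\gamma$ is regular semisimple (``elliptic'' having been defined only for such elements), that $\gamma$ lies in $L$, and that $L$ normalizes $N_Q$. For $x\in N_Q$ and $l\in\Omega\subseteq L$ I would set $y:=lxl^{-1}\in N_Q$; for each fixed $l$ this is a self‑homeomorphism of $N_Q$, and a one‑line computation gives $x^{-1}l^{-1}\gamma l x=l^{-1}\big(y^{-1}\gamma y\big)l$. Hence $x^{-1}l^{-1}\gamma l x\in K$ exactly when $y^{-1}\gamma y\in lKl^{-1}\subseteq\Omega K\Omega^{-1}=:\mathcal K$, a compact subset of $G$. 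So it will be enough to find a single compact set $\mathcal C\subseteq N_Q$ containing $\{\,y\in N_Q:y^{-1}\gamma y\in\mathcal K\,\}$: then any $x$ with $x^{-1}l^{-1}\gamma l x\in K$ for some $l\in\Omega$ satisfies $x\in\Omega^{-1}\mathcal C\,\Omega$, and I can take $E_\Omega:=\Omega^{-1}\mathcal C\,\Omega$, which is compact and, since $L$ normalizes $N_Q$, contained in $N_Q$.

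To produce $\mathcal C$ I would introduce the $F$‑morphism $\psi\colon N_Q\to N_Q$, $\psi(y)=y^{-1}\gamma y\gamma^{-1}$, which makes sense because $\gamma\in Q$ normalizes $N_Q$. Since $y^{-1}\gamma y=\psi(y)\gamma$, the set $\{\,y:y^{-1}\gamma y\in\mathcal K\,\}$ equals $\psi^{-1}\big(\mathcal K\gamma^{-1}\cap N_Q\big)$, and $\mathcal K\gamma^{-1}\cap N_Q$ is compact, being closed in the compact set $\mathcal K\gamma^{-1}$ (here $N_Q$ is closed in $G$). Thus the whole lemma reduces to showing that $\psi$ induces a homeomorphism of $N_Q(F)$ onto itself, after which one sets $\mathcal C:=\psi^{-1}\big(\mathcal K\gamma^{-1}\cap N_Q\big)$.

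For that last point I would first reduce to the split case. Put $T:=C_{\mathbf G}(\gamma)^{\circ}$, a maximal torus since $\gamma$ is regular semisimple; then $T\subseteq L$, because $C_L(\gamma)^{\circ}$ and $T$ are maximal tori of the common dimension $\mathrm{rk}\,\mathbf G=\mathrm{rk}\,L$ with the former contained in the latter. Choosing a finite extension $E/F$ that splits $T$, the group $\mathbf G_E$ is split and there is a Borel subgroup $B$ of $\mathbf G_E$ with $T_E\subseteq B\subseteq Q_E$ (take a Borel of the Levi $L_E$ through $T_E$ and multiply by $N_{Q_E}$). Applying Lemma~\ref{lemmaconjugationbyn} to $(\mathbf G_E,B,\gamma)$ in the coordinates on $N_B$ coming from an ordered product of root subgroups, $\psi$ is realized on $N_B(E)$ by a polynomial map whose coordinate along $\alpha_j$ is $(\alpha_j(\gamma)-1)x_j$ plus a polynomial in the $x_k$ with $\alpha_k\prec\alpha_j$; as $\gamma$ is regular one has $\alpha_j(\gamma)\neq1$ for every root, so this triangular system is inverted by a polynomial map and $\psi$ is an automorphism of the variety $N_B$, in particular a homeomorphism of $N_B(E)$. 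Restricting it to the closed, $\psi$‑stable subsets $N_{Q_E}(E)\subseteq N_B(E)$ (stable since $\gamma\in Q_E$ normalizes $N_{Q_E}$) and then $N_Q(F)\subseteq N_{Q_E}(E)$ ($\psi$ being defined over $F$) yields the desired homeomorphism of $N_Q(F)$, completing the reduction above.

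I expect the main obstacle to be psychological rather than technical: once one thinks of conjugating $l$ away, the uniformity over $\Omega$ simply evaporates, and the residual $l$‑free claim is essentially just Lemma~\ref{lemmaconjugationbyn} together with regularity of $\gamma$. The one place that demands genuine care is the passage through the splitting field $E$ in the last step — checking that $\psi$, its triangular inverse, and the nested closed subgroups all behave well under the base change $E/F$ and the restriction back to $F$‑points.
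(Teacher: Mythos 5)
Your argument is correct and uses the same two engines as the paper -- the triangular conjugation formula of Lemma~\ref{lemmaconjugationbyn} (with regularity giving $\alpha_j(\gamma)\neq 1$) and the reduction to a splitting field $E/F$ -- but it packages them differently. The paper argues by contradiction: it takes a sequence $n_i\in N_Q$ escaping to infinity with $n_i^{-1}l_i^{-1}\gamma l_i n_i\in K$, writes $n_i$ in root coordinates, and uses the triangular form to conclude that $n_i^{-1}l_i^{-1}\gamma l_i n_i$ is unbounded; the uniformity in $l_i\in\Omega$ is handled only implicitly there (one must conjugate the whole picture by $l_i$ and keep track of bounded data). Your version disposes of $l$ at the outset via $x^{-1}l^{-1}\gamma l x=l^{-1}(y^{-1}\gamma y)l$ with $y=lxl^{-1}$, replacing $K$ by the compact set $\Omega K\Omega^{-1}$, and then proves a clean, $l$-free statement: the $F$-morphism $\psi(y)=y^{-1}\gamma y\gamma^{-1}$ of $N_Q$ is proper, because over $E$ it is a triangular polynomial map with invertible diagonal coefficients $\alpha_j(\gamma)-1$. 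This direct properness formulation is arguably tidier than the sequence/contradiction argument and makes the uniformity over $\Omega$ transparent. Two small patches: $\Omega^{-1}\mathcal{C}\,\Omega$ is not contained in $N_Q$ (it contains $l_1^{-1}c\,l_2$ with $l_1\neq l_2$), so take $E_\Omega:=(\Omega^{-1}\mathcal{C}\,\Omega)\cap N_Q$ or the image of $(l,c)\mapsto l^{-1}cl$ on $\Omega\times\mathcal{C}$, which is compact and lies in $N_Q$ since $L$ normalizes $N_Q$; and restricting the homeomorphism of $N_B(E)$ to the closed $\psi$-stable subsets $N_{Q_E}(E)$ and $N_Q(F)$ gives by itself only a homeomorphism onto the image, i.e.\ properness (preimages of compacta in $N_Q(F)$ are closed subsets of compacta in $N_B(E)$), not surjectivity onto $N_Q(F)$ -- but properness is all your argument needs to make $\mathcal{C}$ compact, so this costs nothing.
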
 
\begin{proof} 
First suppose that the maximal split torus $T$ containing $\gamma$ is split over $F$. For the sake of contradiction assume that there exist a compact subset $\Omega $ of $L$ and a sequence $(n_{i})_{i}\subseteq N_{Q}$ escaping to infinity such that $n_{i}^{-1}l_{i}^{-1}\gamma l_{i}n_{i}\in K$ for some $(l_{i})_{i}\subseteq  \Omega$. Write $n_{i}=u_{\alpha_{1}}\left(x_{1}^{(i)}\right)\ldots u_{\alpha _{m}}\left(x_{m}^{(i)}\right)$ where $\alpha _{1}, \ldots , \alpha _{m}$ are as in Lemma \ref{lemmaconjugationbyn} with $P=Q$ and $m=\dim N_{Q}$. Since the map
\begin{equation*} 
F^{m}\ni (x_{1},\ldots ,x_{m})\mapsto u_{\alpha _{1}}(x_{1})\ldots u_{\alpha_{m}}(x_{m})\in N_{Q}
\end{equation*} 
is a homeomorphism, there exists $1\leqslant j\leqslant m$ such that the sequence $\left(x_{j}^{(i)}\right)$ is not bounded. Since $\gamma $ is regular, $\alpha _{i}(\gamma )\neq 1$ for any $\alpha _{i} \in \Delta ^{+}$. For any $l_{i}$ the element $l_{i}^{-1}\gamma l_{i}$ is non-elliptic. By Lemma \ref{lemmaconjugationbyn}, the sequence $n_{i}^{-1}l_{i}^{-1}\gamma l_{i}n_{i}$ is not bounded which leads to a contradiction.
 
If $T$ is not split over $F$, then choose a finite extension $E/F$ such that $T$ is split over $E$ and repeat the above argument for $\gamma \in Q(E)$.   

\end{proof}

\begin{proof}[Proof of Proposition \ref{propositionomegacompact}]
We consider $\gamma $ up to $G$-conjugation so without loss of generality we can assume that $\gamma \in K$.
If $\gamma $ is elliptic then $A_{M}(\gamma)$ is trivial and Proposition \ref{propositionomegacompact} follows from Lemma \ref{lemmaSisfinite}. Assume now that $\gamma $ is non-elliptic. 
Then, there exists a proper parabolic subgroup of $G$ containing $\gamma$. Take $P$ to be a parabolic minimal with this property. Denote by $L$ its Levi subgroup and by $N$ its unipotent radical. 
Denote by $p\colon P\to P/N$ the canonical projection. For any subgroup $S$ of $P$ denote by $\overline{S}$ its image in $P/N$. 
 By Iwasawa decomposition,
\begin{align} 
\Omega (K)&=\{ g\in A_{M}(\gamma)\setminus G| \ \ g^{-1}\gamma g \in K\} \\
&=\{ lnk| \ \ l \in A_{M}(\gamma )\setminus L, n\in N, k\in K\ \ \textrm{and} \ \ k^{-1}n^{-1}l^{-1}\gamma lnk \in K\} \\
&\label{equationomega1}=\{ lnk| \ \ l \in A_{M}(\gamma) \setminus L, n\in N, k\in K\ \ \textrm{and}\ \  n^{-1}l^{-1} \gamma ln\in K\}.
\end{align}
Note that if $n^{-1}l^{-1}\gamma ln\in K$ then $l^{-l}\gamma l$ belongs to a compact subgroup $p^{-1}(\overline{K\cap P})$. By Lemma \ref{lemmaSisfinite} applied to $L$, the set
\begin{equation*} 
\Omega(K,P):=\{ l\in L| \ \ l^{-1}\gamma l\in p^{-1}(\overline{K\cap P})\}
\end{equation*} is compact. By (\ref{equationomega1}),
\begin{align*} 
\Omega (K)&=\{lnk|\ \ l\in A_{M}(\gamma)\setminus L, n\in N, k\in K,\ \ n^{-1}l^{-1}\gamma ln\in K\ \ {\rm and}\ \ l^{-1}\gamma l\in p^{-1}(\overline{K\cap P})\}\\
&=\{lnk| \ l\in A_{M}(\gamma)\setminus \Omega(K,P), n\in N, k\in K \ \ {\rm and} \ \ n^{-1}l^{-1}\gamma ln \in K\}.
\end{align*}
By Lemma \ref{lemmaNgammacompact} applied to $\Omega =\Omega(K,P)$, the set $\Omega (K)$ is compact. 


\end{proof}

\section{Proof of Theorem \ref{thmmain}}
The second main ingredient of the proof of Theorem \ref{thmmain} is an upper bound on the irreducible characters of compact $p$-adic groups \cite[Theorem 5.8.]{Fraczyk}.
\newline 
If $U$ is a compact subgroup of $\G(F)$ and $\rho$ is an irreducible representation of $U$ then we denote by $\chi _{\rho}$ the character of $\rho$ extended to $\G(F)$ by $0$.
\begin{proposition}{\cite[Theorem 5.8.]{Fraczyk}}\label{propositionFraczyk}
Let $\H$ be a connected reductive group over a non-Archimedean local field $F$.  Denote by $W$ the Weyl group of $\H$. For every compact subgroup $U\subseteq \H(F)$, every irreducible (complex) representation $\rho$ of $U$ we have
\begin{equation*} 
|\chi _{\rho} (\gamma)|\leqslant |\Delta (\gamma)|_{F}^{-1} 2^{\dim \H- {\rm rk} (\H)}|W|
\end{equation*}
for every regular semisimple element $\gamma \in \H(F)$.
\end{proposition}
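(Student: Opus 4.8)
The plan is to reduce the statement to a character bound for a finite group, and to prove that bound by peeling off the Moy--Prasad layers of a parahoric, with Deligne--Lusztig theory supplying the estimate at the reductive level. First, one may assume $\gamma\in U$: otherwise $\chi_\rho(\gamma)=0$ by the convention that $\chi_\rho$ vanishes off $U$, while the right--hand side is positive. Then $\gamma$ lies in a compact subgroup, so $\alpha(\gamma)\in\O^\times$ for every root $\alpha$ of $\H$; hence $|\Delta(\gamma)|_F=\prod_{\alpha}|1-\alpha(\gamma)|_F\leqslant 1$, and (writing $|\Phi|=\dim\H-\mathrm{rk}\,\H$ for the number of roots) the asserted bound is already $\geqslant 2^{|\Phi|}|W|\geqslant 1$. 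By the Bruhat--Tits fixed point theorem $U$ fixes a point $x$ of the building $\mathcal{B}(\H,F)$; conjugating $\H$, $U$ and $\gamma$ simultaneously we may assume $U\subseteq\H(F)_x$. Since $\rho$ is smooth it is trivial on some Moy--Prasad subgroup $\H(F)_{x,r}$, so $\rho$ factors through a finite group $\overline U$ that carries a filtration with reductive top quotient $\mathbb{G}_x(k_F)$ (the finite reductive quotient of the parahoric at $x$) and successive abelian layers that are $k_F$--vector groups isomorphic to $\mathrm{Lie}(\mathbb{G}_x)(k_F)$, with the image $\overline\gamma$ of $\gamma$ acting by the adjoint action. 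Everything is thereby reduced to bounding $|\chi_{\overline\rho}(\overline\gamma)|$.

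The base case is when the image of $\gamma$ in $\mathbb{G}_x(k_F)$ is still regular semisimple --- equivalently $v_F(\Delta(\gamma))=0$, so that $|\Delta(\gamma)|_F=1$. Here I would invoke the Deligne--Lusztig character formula: the value of any irreducible character of a finite reductive group at a regular semisimple element of a maximal torus $\overline T$ is a sum, over the at most $|W|$ Weyl group elements conjugating it back into $\overline T$, of terms $\pm\theta(\,\cdot\,)$ for a character $\theta$ of $\overline T$; hence $|\chi_\rho(\gamma)|=|\chi_{\overline\rho}(\overline\gamma)|\leqslant |W|$ (up to a bounded factor, absorbed into $2^{|\Phi|}$, coming from a possibly non--connected centre or component group), which is the asserted inequality with room to spare. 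The geometric content is already visible for principal series: by the induced--character formula, $|\chi_{\mathrm{Ind}_{\overline B}^{\mathbb{G}_x(k_F)}\psi}(\overline\gamma)|$ is at most the number of Borel subgroups containing $\overline\gamma$, which is $\leqslant|W|$.

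For the general case I would descend the filtration of $\overline U$ and estimate the passage through one abelian layer at a time. On a layer $\cong\mathrm{Lie}(\mathbb{G}_x)(k_F)$, Clifford theory together with the Kirillov orbit method describes the restriction of $\rho$ in terms of characters whose value at $\log\gamma$ is a normalised exponential sum over a coadjoint orbit. Decomposing this sum along root directions, the direction $\alpha$ is \emph{resonant} --- contributing the full factor $q=|k_F|$ --- exactly when $\gamma$ is still $\equiv 1$ along $\alpha$ at that level, i.e.\ when $v_F(\alpha(\gamma)-1)$ exceeds the level index; otherwise it is a nontrivial additive--character sum, contributing at most a bounded slack factor, and $0$ in the extreme non--resonant case. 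Telescoping over all levels multiplies the resonant contributions to $\prod_{\alpha}q^{\,v_F(\alpha(\gamma)-1)}=q^{\,v_F(\Delta(\gamma))}=|\Delta(\gamma)|_F^{-1}$; the accumulated slack is absorbed into $2^{|\Phi|}=2^{\dim\H-\mathrm{rk}\,\H}$; and once the image of $\gamma$ has become regular semisimple one is back in the base case, contributing the factor $|W|$. Multiplying the three gives the theorem. The same estimate may also be organised geometrically on $\mathcal{B}(\H,F)$: the resonance counts are counts of simplices near $x$ fixed by $\gamma$, and those are what $|\Delta(\gamma)|_F^{-1}$ controls.

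The hard part will be the bookkeeping in the last step. One must arrange the induction on the length of the filtration so that the slack genuinely accumulates to at most $2^{|\Phi|}$ rather than growing with the depth of $\gamma$; match the resonance condition at each level precisely to $v_F(\alpha(\gamma)-1)$ so that the telescoping product is exactly $q^{v_F(\Delta(\gamma))}$; track how the Clifford stabilisers at successive layers interact with the eventual Deligne--Lusztig step; and verify that a non--connected or non--reductive $\mathbb{G}_x$ (Iwahori and intermediate parahorics), a non--connected centre, and small residue characteristic perturb only the constants. The reductions, the Deligne--Lusztig input, and the structure of the Moy--Prasad layers are all standard.
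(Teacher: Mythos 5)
There is no proof of this Proposition in the paper itself --- it is imported verbatim from Fr\k{a}czyk --- so I am comparing your attempt against the cited argument, whose structure is visible in the paper because the proof of Theorem \ref{thmellipticexplicit} in Section \ref{ellipticexplicit} is explicitly modelled on it. Your reductions (extension by zero, $U$ inside a parahoric via the fixed-point theorem, factoring $\rho$ through a finite quotient) are fine, but the argument has a genuine gap exactly where you flag it: the third paragraph, which is where the factors $|\Delta(\gamma)|_F^{-1}$ and $2^{\dim\H-\mathrm{rk}\,\H}$ must come from, is a programme rather than a proof. Nothing you write shows that the ``slack'' from the non-resonant root directions accumulates to a constant bounded by $2^{\dim\H-\mathrm{rk}\,\H}$ \emph{independently of the depth of the filtration}, i.e.\ of the level of $\rho$; that uniformity is the entire content of the theorem, since the trivial bound $|\chi_\rho(\gamma)|\leqslant\dim\rho$ is unbounded. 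The single-layer mechanism is also shakier than you suggest: the Moy--Prasad quotients $\H(F)_{x,r}/\H(F)_{x,r+}$ are graded pieces of the Lie algebra filtration rather than copies of $\mathrm{Lie}(\mathbb{G}_x)(k_F)$, and the Clifford/Kirillov description via coadjoint orbits and additive-character sums needs hypotheses on the residue characteristic (a usable exponential, nondegenerate pairings) that fail for small $p$ --- this cannot be dismissed as ``perturbing only the constants'' in a statement asserted uniformly over all $F$. Even your base case needs more care: the multiplicities $\langle\chi,R_T^\theta\rangle$ are not $\pm1$ in general, so $|\chi(\bar\gamma)|\leqslant|W|$ at a regular semisimple element requires the uniform-projection/orthogonality argument, not just the Deligne--Lusztig character formula.

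By contrast, the cited proof runs along entirely different, and much shorter, lines --- essentially those of Section \ref{ellipticexplicit}. One restricts $\chi_\rho$ to the compact torus through $\gamma$, where it is a non-negative integral combination of unitary characters; one multiplies by an auxiliary function $\alpha(t)=\prod_j\bigl(1-\kappa(\alpha_j(\lambda_j(t)))\bigr)$ built from unitary characters of the torus, chosen so that $|\alpha(\gamma)|\geqslant1$, $|\alpha(t)|\leqslant 2^{m}$ with $m=\frac{1}{2}(\dim\H-\mathrm{rk}\,\H)$, and $\alpha$ vanishes wherever some $|1-\lambda_j(t)|_F<|1-\lambda_j(\gamma)|_F$; orthogonality then bounds the sum of squares of the integer coefficients by $2^{2m}|\Delta(\gamma)|_F^{-1}|W|$, and integrality of the coefficients converts the $\ell^2$ bound into the pointwise one. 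That route produces exactly the stated constants, is uniform in $p$, and requires none of the Deligne--Lusztig or orbit-method machinery. If you intend to pursue your filtration approach, the bookkeeping you defer is not routine and is where the theorem actually lives.
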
 
For readers' convenience we prove the following, rather standard, fact.
\begin{lemma} \label{lemmachirho}
The $\chi _{\rho}$ is a finite sum of matrix coefficients of $\pi$.
\end{lemma}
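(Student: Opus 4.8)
The plan is to unwind the definitions on both sides. Recall $\pi = \cInd_K^G \rho$, where $K$ is a maximal compact open subgroup of $G$ (after absorbing the center, which acts trivially since $\G$ is semisimple, $Z$ is compact). The compact induction $\cInd_K^G\rho$ is realized on the space $V$ of functions $f\colon G\to V_\rho$ that are compactly supported modulo $K$ and satisfy $f(kg)=\rho(k)f(g)$ for $k\in K$, $g\in G$, with $G$ acting by right translation. First I would fix an orthonormal basis $\{e_i\}$ of $V_\rho$ and, for each coset representative $g_s$ in a set $\mathcal{S}$ of representatives for $K\backslash G$, define the vectors $f_{s,i}\in V$ supported on $Kg_s$ with $f_{s,i}(k g_s)=\rho(k)e_i$. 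These $\{f_{s,i}\}$ form a basis of $V$; let $\{f_{s,i}^{\vee}\}$ be the dual basis, realized inside the smooth dual (the contragredient $\tilde\pi$ is again a compact induction of $\tilde\rho$). The matrix coefficient associated to $f_{s,i}$ and $f_{s,i}^\vee$ is $g\mapsto \langle \pi(g)f_{s,i}, f_{s,i}^\vee\rangle$.

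\textbf{Key computation.} The point is that $\langle \pi(g)f_{s,i}, f_{t,j}^\vee\rangle$ is, up to normalization, the $(i,j)$-entry of $\rho$ evaluated at the ``$K$-part'' of $g_s g g_t^{-1}$ when that element lies in $K$, and zero otherwise. Concretely, for the diagonal choice $s=t$ with $g_s = e$ (the identity coset), $\langle \pi(g) f_{e,i}, f_{e,j}^\vee\rangle = \langle \rho(g)e_i, e_j\rangle$ when $g\in K$ and $0$ when $g\notin K$; that is, this single matrix coefficient equals $(i,j)$-entry of $\chi_\rho$-type data, and summing over $i=j$ gives exactly $\chi_\rho(g) = \tr\rho(g)\mathbf{1}_K(g)$. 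More precisely, I would show
\[
\chi_\rho(g) \;=\; \sum_{i} \langle \pi(g) f_{e,i},\, f_{e,i}^\vee\rangle
\]
for all $g\in G$: both sides vanish off $K$ (the right side because $\pi(g)f_{e,i}$ is supported on $Kg$, which meets the support $K$ of $f_{e,i}^\vee$ only when $g\in K$), and for $g\in K$ the right side is $\sum_i \langle\rho(g)e_i,e_i\rangle = \tr\rho(g) = \chi_\rho(g)$. Since $\dim V_\rho<\infty$, this is a \emph{finite} sum of matrix coefficients of $\pi$, which is the claim.

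\textbf{Main obstacle.} The only real care needed is the bookkeeping identifying elements of the smooth dual $\tilde\pi$ with the ``dual basis'' functionals $f_{e,i}^\vee$ — i.e.\ checking that the linear functional $f\mapsto \langle f(e), \check e_i\rangle$ (pairing the value at the identity with a basis vector $\check e_i$ of $V_\rho^\vee$) is smooth and that its translates behave as required, so that $g\mapsto \langle\pi(g)f_{e,i},f_{e,i}^\vee\rangle$ genuinely qualifies as a matrix coefficient of $\pi$ in the usual sense (a function $g\mapsto \langle \pi(g)v, \tilde v\rangle$ with $v\in V$, $\tilde v\in \tilde V$). This is routine for admissible representations and uses only that $\rho$ is finite-dimensional; no deep input is required. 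One should also note that the extension-by-zero in the definition of $\chi_\rho$ is compatible with the support computation above, which is immediate. I would present the argument in the concrete model just described rather than invoking a black-box functoriality statement, since the explicit formula makes the finiteness transparent.
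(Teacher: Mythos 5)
Your proof is correct and takes essentially the same route as the paper: both pick the finitely many vectors in the induced model supported on the coset $K$ determined by a basis of the space of $\rho$, and check that the corresponding diagonal matrix coefficients vanish off $K$ and sum to $\tr\,\rho(g)$ on $K$. The only cosmetic differences are that the paper pairs these vectors against themselves via the invariant inner product on $\cInd_{K}^{G}\rho$ rather than against evaluation-at-identity functionals in the smooth contragredient, and that with your conventions the support of $\pi(g)f_{e,i}$ is $Kg^{-1}$ rather than $Kg$ (which does not affect the vanishing argument).
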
 
\begin{proof} 
Denote by $W$ the vector space associated to $\rho$. The trace of $\rho$ is equal to 
\begin{equation*} 
\sum _{i} \langle w_{i},\rho(g)w_i \rangle
\end{equation*} 
for any orthonormal basis $\{w_{i}\}$ of $W$. Define $f_{w_{i}}\in \cInd _{K}^{G}\rho$ as follows
\begin{equation*} 
f_{w_i}(g)=
\begin{cases}
\rho(g)w_{i}& {\rm if}\ \  g\in K\\
 0& {\rm otherwise}.
\end{cases}
\end{equation*} 
Write $\langle \cdot,\cdot \rangle $ 
for the inner product on $\cInd _{K}^{G}\rho$ given by $\langle f,g\rangle =\int_{G}\langle f(h), g(h)\rangle dh$ for the Haar measure $dh$ which is $1$ on $K$. 
We have \begin{equation*} 
\chi_{\rho}(g) =\sum _{i} \langle f_{w_{i}},\pi (g) f_{w_{i}}\rangle 
\end{equation*} 
for any $g\in G $. 
\end{proof}

\begin{proof}[Proof of Theorem \ref{thmmain}]
Let $\pi$ be an irreducible supercuspidal representation of $G$ of the form $\cInd _{J}^G \Lambda $ with an open compact modulo center $J$. Since $\G$ is semisimple we can assume $\pi=\cInd _{K}^{G}\rho $ for a maximal compact open subgroup $K$ of $G$ and an irreducible representation $\rho$ of $K$.

To estimate $\theta_{\pi}(\gamma)$ we use Arthur's formula \cite[Theorem]{Arthur1987}. 
Let $P$ be a parabolic subgroup of $\G(F)$ which contains $\gamma $ and minimal with this property. Let $M$ be the Levi component of $P$ containing $\gamma $.
The element $\gamma $ is $M$-elliptic over $F$, i.e. the centralizer of $\gamma $ in $M$ is compact modulo $A_M(\gamma)$. 
\newline Recall now the definition of the weight function $v_{M}$ appearing in the Arthur's formula. Choose a special maximal compact subgroup $K_{1}$ of $\G(F)$ which is in a good position relative to $M$ (i.e. the vertex of $K_{1}$ in the building of $\G$ belongs to the apartment of a maximal split torus of $M$). Denote by $\mathcal{P}(M)$ the set of all parabolic subgroups of $\G$ defined over $F$ with Levi component $M$. Let $P'\in \mathcal{P}(M)$ be a parabolic subgroup with Levi decomposition $P'=MN_{P'}$. Any point $x\in \G(F)$ can be written in the form
\begin{equation*} 
x=n_{P'}(x)m_{P'}(x)k_{P'}(x),
\end{equation*}
where $n_{P'}(x)\in N_{P'}(F)$, $m_{P'}(x)\in M(F)$ and $k_{P'}(x)\in K_{1}$. Write $X(M)$ for the module of rational characters of $M$. Consider a map  
\begin{equation*} 
H_{M}\colon M(F) \to \mathbf{a}_{M}:= \Hom (X(M)_{F}, \R)
\end{equation*} 
given by $\exp{ (\langle H_{M}(m),\chi \rangle )}=|\chi (m)|$ for $m\in M(F)$, $\chi \in X(M)_{F}$. We put 
\begin{equation*} 
H_{P'}(x):=H_{M}(m_{P'}(x)).
\end{equation*}
The function $v_{M}$ is defined as the volume of the convex hull of the projection of 
\begin{equation*} 
\{ -H_{P'}(x)| \ \ P'\in \mathcal{P}(M)\}
\end{equation*} onto $\mathbf{a}_{M}/\mathbf{a}_{G}$. Remark that by definition, the function $v_{M}$ is constant on right $K_{1}$-cosets, so in particular it is continuous. 
\newline 

By Lemma \ref{lemmachirho} and \cite{Arthur1987}, 
\begin{equation}\label{equationArthur} 
\int_{A_{M}(\gamma)\setminus G}f(x^{-1}\gamma x)v_{M}(x)d\mu (x)=(-1)^{\dim (A_{M}(\gamma)/ A_{\G})}\theta_{\pi}(f)\theta_{\pi}(\gamma)
\end{equation}
for $f=\chi _{\rho}$.
 By Proposition \ref{propositionomegacompact}, $\Omega (K)$ has finitely many right cosets modulo $K$. By Frobenius reciprocity, 
\begin{equation*} 
\theta_{\pi}(\chi_{\rho})=\tr\,\pi (\chi_{\rho})=\dim \Hom_{K}(\rho,\pi)=\dim \Hom_{G}(\cInd_{K}^{G}\rho , \pi)=\dim \Hom_{G}(\pi , \pi )=1.
\end{equation*}

By (\ref{equationArthur}), 
\begin{align*} 
|\theta _{\pi }(\gamma)| &=\left| \int _{A_{M}(\gamma)\setminus G}\chi _{\rho} (g^{-1}\gamma g) v_{M}(g)dg\right|=\left| \int _{\Omega (K)}\chi _{\rho}(g^{-1}\gamma g) v_{M}(g)dg \right|\\
&\leqslant \sum _{gK\in \Omega (K)/ K}\int _{K}\left|\chi _{\rho}(k^{-1}g^{-1}\gamma gk)\right||v_{M}(gk)|dk. 
\end{align*} 
Since the function $v_{M}$ is continuous and $\Omega (K)$ is compact, there exists a constant $C>0$ dependent on $\gamma $ such that $|v_{M}(gk)|< C$ for every $k\in K$, $g\in \Omega (K)$. Together with Proposition \ref{propositionFraczyk} it implies
\begin{align*} 
|\theta _{\pi}(\gamma)|&\leqslant \left( \sum _{g\in\Omega(K)/K} |\Delta (g^{-1}\gamma g )|_{F}^{-1}
C\right)2^{\dim \G -{\rm rk} (\G)}|W|\\
&= |\Delta (\gamma)|_{F}^{-1}|\Omega(K)/K|C2^{\dim \G - \textrm{rk} (\G )} |W|=: \kappa _{\gamma}
\end{align*} 
for some $\kappa _{\gamma}$ dependent only on $G$ and $\gamma$. We can take $\kappa _{\gamma} $ to be independent of $K$ because there is only finitely many $G$-conjugacy classes of maximal compact subgroups of $G$. 
\end{proof} 

\section{Explicit bound for $\gamma$ elliptic}\label{ellipticexplicit}
In this section we compute an explicit bound on $\theta _{\pi}(\gamma)$ for $\gamma $ elliptic. 

\begin{theorem}\label{thmellipticexplicit} 
Let $\G$ be a connected semisimple group over a non-Archimedean local field $F$ of characteristic zero. Let $W$ be the Weyl group of $\G $. Then, for every regular elliptic $\gamma \in \G(F)$ and every irreducible compactly induced supercuspidal representation $\pi$ we have 
\begin{equation*} 
|\theta _{\pi}(\gamma)|\leqslant |\Delta(\gamma)|_{F}^{-1}2^{\dim \G-{\rm rk }\,\G}|W|.
\end{equation*} 
\end{theorem}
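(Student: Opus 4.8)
The plan is to specialize the argument in the proof of Theorem \ref{thmmain} to the elliptic case, where several simplifications occur and all the constants can be tracked explicitly. Since $\gamma$ is regular elliptic, the minimal parabolic containing $\gamma$ is $\G$ itself, so $M=\G$, the maximal split component $A_M(\gamma)=A_{\G}$ is trivial (because $\G$ is semisimple), and the weight function $v_M$ is identically equal to $1$: indeed $\mathcal{P}(\G)=\{\G\}$ and $\mathbf{a}_{\G}/\mathbf{a}_{\G}=0$, so the convex hull in question is a point and Arthur's formula degenerates to the plain orbital-integral/character identity. Concretely, after writing $\pi\cong\cInd_K^G\rho$ for a maximal compact open $K$ and conjugating $\gamma$ into $K$, Arthur's formula (\ref{equationArthur}) with $f=\chi_\rho$ reads $\int_{Z\setminus G}\chi_\rho(g^{-1}\gamma g)\,d\mu(g)=\theta_\pi(\chi_\rho)\theta_\pi(\gamma)$, and by the Frobenius reciprocity computation already carried out we have $\theta_\pi(\chi_\rho)=1$, so $|\theta_\pi(\gamma)|=\big|\int_{\Omega(K)}\chi_\rho(g^{-1}\gamma g)\,d\mu(g)\big|$.

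Next I would invoke the two structural facts proved earlier: by Lemma \ref{lemmaSisfinite} (equivalently by Proposition \ref{propositionomegacompact} in the elliptic case) the set $\Omega(K)=\{g\in Z\setminus G\mid g^{-1}\gamma g\in K\}$ is compact, hence a finite union of right $K$-cosets, and the key point is that for $\gamma$ elliptic this set is in fact a \emph{single} coset. The reason is that $\Omega(K)/K$ injects into the set of fixed points of $\gamma$ on the (discrete) set of vertices of the Bruhat--Tits building stabilized by conjugates of $K$, and more precisely $g^{-1}\gamma g\in K$ and $g'^{-1}\gamma g'\in K$ with $gK\neq g'K$ would force $\gamma$ to fix two distinct points $gx$ and $g'x$, where $x$ is the vertex stabilized by $K$; then $\gamma$ would fix the geodesic between them, and the stabilizer of a point on that geodesic other than its endpoints, or the pointwise stabilizer, lies in a proper parabolic — contradicting ellipticity. (One must be slightly careful: ellipticity already guarantees the fixed-point set of $\gamma$ in the building is bounded, so the real content is that it is a single vertex; alternatively one argues that $Z\backslash C_G(\gamma)$ is compact and acts transitively on $\Omega(K)/K$ while the point stabilizer is all of $Z\backslash C_G(\gamma)$.) Granting $|\Omega(K)/K|=1$, the integral is over a single coset $gK$ with $\mu(gK)=\mu(K)=1$.

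Finally, bounding the integrand pointwise by Proposition \ref{propositionFraczyk} applied to the compact group $K$ and the irreducible representation $\rho$, we get $|\chi_\rho(k^{-1}g^{-1}\gamma gk)|\leqslant |\Delta(k^{-1}g^{-1}\gamma gk)|_F^{-1}\,2^{\dim\G-\mathrm{rk}\,\G}\,|W|$ for every $k\in K$; since the Weyl discriminant is a conjugation invariant, $|\Delta(k^{-1}g^{-1}\gamma gk)|_F=|\Delta(\gamma)|_F$, so the bound is the constant $|\Delta(\gamma)|_F^{-1}\,2^{\dim\G-\mathrm{rk}\,\G}\,|W|$. Integrating over $gK$ (measure $1$) gives exactly
\begin{equation*}
|\theta_\pi(\gamma)|\leqslant |\Delta(\gamma)|_F^{-1}\,2^{\dim\G-\mathrm{rk}\,\G}\,|W|,
\end{equation*}
independent of $K$ and of $\pi$.

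The routine parts (the degeneration $v_M\equiv 1$, $\theta_\pi(\chi_\rho)=1$, conjugation-invariance of $\Delta$, and the measure normalization) are immediate from the material already in the excerpt. The main obstacle, and the only place requiring genuine care, is establishing that $\Omega(K)$ consists of a single $K$-coset rather than merely finitely many; this is where ellipticity must be used in an essential way, via the building-theoretic fixed-point argument (or equivalently the transitivity of the compact group $Z\backslash C_G(\gamma)$ on $\Omega(K)/K$ together with the identification of the stabilizer). If one only knows $|\Omega(K)/K|$ is finite, one still obtains a bound of the shape $c_\gamma\,|\Delta(\gamma)|_F^{-1}2^{\dim\G-\mathrm{rk}\,\G}|W|$, so the sharp statement really hinges on pinning down that cardinality to be $1$.
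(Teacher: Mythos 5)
The single step on which your whole argument hinges --- that for $\gamma$ regular elliptic the set $\Omega(K)$ is one right $K$-coset --- is false, and the building-theoretic justification you sketch for it does not work. The cosets $gK\in\Omega(K)/K$ correspond to vertices $gx$ in the $G$-orbit of the vertex $x$ fixed by $K$ that are fixed by $\gamma$, and ellipticity only guarantees that this fixed-point set is \emph{bounded} (this is exactly Lemma \ref{lemmaSisfinite}), not that it is a single vertex. For example, in $\SL_2(\Q_p)$ a regular elliptic element lying deep in a principal congruence subgroup fixes pointwise a large ball of the tree, so $|\Omega(K)/K|$ can be arbitrarily large as $\gamma$ varies (it is finite for each fixed $\gamma$, but not $1$, and not bounded independently of $\gamma$). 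Your geodesic argument conflates two different kinds of stabilizers: only stabilizers of points in the \emph{boundary} of the Satake--Berkovich compactification lie in proper parabolic subgroups (this is what the paper uses in Lemma \ref{lemmaSisfinite}); stabilizers of interior points of the building are compact open, so $\gamma$ fixing a whole geodesic segment contradicts nothing. The fallback claim that $Z\backslash C_G(\gamma)$ acts transitively on $\Omega(K)/K$ is likewise unjustified and false in the same examples. The parts of your argument that are correct ($v_M\equiv 1$, $\theta_\pi(\chi_\rho)=1$, conjugation invariance of $\Delta$, Fr\k{a}czyk's bound on $\chi_\rho$) only yield $|\theta_\pi(\gamma)|\leqslant |\Omega(K)/K|\,|\Delta(\gamma)|_F^{-1}2^{\dim\G-{\rm rk}\,\G}|W|$, which is precisely the non-explicit constant $\kappa_\gamma$ of Theorem \ref{thmmain}, not the stated explicit bound.

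The paper obtains the explicit bound by an entirely different mechanism, and it is worth seeing why: instead of counting cosets, it uses Kazhdan's orthogonality relations for supercuspidal characters (Lemma \ref{lemmaorth}) to control the $L^2$-norm of $|\Delta|_F^{1/2}\theta_\pi$ on the elliptic torus $\mathbf{T}_0(F)$ containing $\gamma$, shows via Lemma \ref{lemmasumofcharacters} (which is where compactness of $\Omega(K)$ and the compact-induction hypothesis enter) that $\theta_\pi$ restricted to a suitable subset of $\mathbf{T}_0(F)$ is a finite \emph{integral} combination of unitary characters, and then runs the Pontryagin-duality trick from Fr\k{a}czyk's proof: an auxiliary function $\alpha$ with $|\alpha|\leqslant 2^{2m}$ in square norm, $|\alpha(\gamma)|\geqslant 1$, vanishing where the discriminant is smaller than $|\Delta(\gamma)|_F$, so that integrality of the coefficients ($\sum_\varphi|a_\varphi|\leqslant\sum_\varphi a_\varphi^2$) converts the $L^2$ bound into the pointwise bound at $\gamma$. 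If you want to salvage your approach, you would need a genuinely new idea to bound $|\Omega(K)/K|$ in terms of $|\Delta(\gamma)|_F^{-1}$ uniformly in $\pi$ and $K$; as written, the proposal has a gap at its only nontrivial step.
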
 
 
\begin{corollary} 
Let $\G$ be a connected semisimple group over a non-Archimedean local field $F$ of characteristic zero. Let $W$ be the Weyl group of $\G$. Then, for every regular elliptic element $\gamma \in \G(F)$ and every irreducible compactly induced supercuspidal representation $\pi$ with $\deg (\pi)\geqslant |\Delta (\gamma )|_{F}^{-1}$ we have 
\begin{equation*} 
|\theta _{\pi} (\gamma) |\leqslant |\Delta (\gamma)| _{F}^{-\frac{1}{2}}\deg(\pi)^{\frac{1}{2}}2^{\dim \G-{\rm rk}\, \G }|W|.  
\end{equation*} 
\end{corollary}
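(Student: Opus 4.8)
The plan is to deduce this corollary directly from Theorem~\ref{thmellipticexplicit} by an elementary comparison of the two bounds. Theorem~\ref{thmellipticexplicit} gives, for every regular elliptic $\gamma\in\G(F)$ and every irreducible compactly induced supercuspidal representation $\pi$, the unconditional estimate
\begin{equation*}
|\theta_{\pi}(\gamma)|\leqslant |\Delta(\gamma)|_{F}^{-1}2^{\dim\G-{\rm rk}\,\G}|W|.
\end{equation*}
So it suffices to show that the hypothesis $\deg(\pi)\geqslant|\Delta(\gamma)|_{F}^{-1}$ forces $|\Delta(\gamma)|_{F}^{-1}\leqslant |\Delta(\gamma)|_{F}^{-1/2}\deg(\pi)^{1/2}$.

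First I would record that, $\gamma$ being regular semisimple, $\Delta(\gamma)\neq 0$, hence $|\Delta(\gamma)|_{F}>0$; also $\deg(\pi)>0$. Applying the monotone function $t\mapsto t^{1/2}$ to the inequality $\deg(\pi)\geqslant|\Delta(\gamma)|_{F}^{-1}$ gives $\deg(\pi)^{1/2}\geqslant|\Delta(\gamma)|_{F}^{-1/2}$, and multiplying through by the positive number $|\Delta(\gamma)|_{F}^{-1/2}$ produces
\begin{equation*}
|\Delta(\gamma)|_{F}^{-1/2}\deg(\pi)^{1/2}\geqslant |\Delta(\gamma)|_{F}^{-1}.
\end{equation*}

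Finally I would multiply this by the positive constant $2^{\dim\G-{\rm rk}\,\G}|W|$ and chain it with the bound of Theorem~\ref{thmellipticexplicit}:
\begin{equation*}
|\theta_{\pi}(\gamma)|\leqslant |\Delta(\gamma)|_{F}^{-1}2^{\dim\G-{\rm rk}\,\G}|W|\leqslant |\Delta(\gamma)|_{F}^{-1/2}\deg(\pi)^{1/2}2^{\dim\G-{\rm rk}\,\G}|W|,
\end{equation*}
which is exactly the asserted inequality. There is no real obstacle here: the corollary is a formal weakening of Theorem~\ref{thmellipticexplicit}, reorganising the Weyl-discriminant factor into a mixed term involving $\deg(\pi)$, and is valid precisely on the range $\deg(\pi)\geqslant|\Delta(\gamma)|_{F}^{-1}$ where the substitution $|\Delta(\gamma)|_{F}^{-1/2}\leqslant\deg(\pi)^{1/2}$ is legitimate. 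The only ingredient beyond Theorem~\ref{thmellipticexplicit} is the monotonicity of the square root, so the "hard part" is merely bookkeeping of positivity of the quantities involved.
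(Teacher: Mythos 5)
Your argument is correct and is exactly the intended deduction: the paper states this corollary as an immediate consequence of Theorem~\ref{thmellipticexplicit}, obtained by writing $|\Delta(\gamma)|_{F}^{-1}=|\Delta(\gamma)|_{F}^{-1/2}|\Delta(\gamma)|_{F}^{-1/2}\leqslant|\Delta(\gamma)|_{F}^{-1/2}\deg(\pi)^{1/2}$ under the hypothesis $\deg(\pi)\geqslant|\Delta(\gamma)|_{F}^{-1}$. Nothing is missing.
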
 
This goes towards \cite[Conjecture 3.15. (iii)]{KST2}.
\\The proof of Theorem \ref{thmellipticexplicit} is inspired by the proof of \cite[Theorem 5.8.]{Fraczyk}. However, for $\G$ non-compact some significant changes are needed. 
In order to estimate $\theta _{\pi}$, we rely on Kazhdan's orthogonality relations (Lemma \ref{lemmaorth}) and the following lemma. 

Fix a regular elliptic element $\gamma \in \G(F)$. Let $\mathbf{T}_{0}$ be an elliptic torus such that $\gamma \in \mathbf{T}_{0}(F)$. Denote by $\G^{rs}(F)$ the set of regular semisimple elements in $\G(F)$. For any irreducible representation $\tau $ of a compact group we write $\chi _{\tau}$ for its trace. 

\begin{lemma}\label{lemmasumofcharacters} 
Let $S$ be a closed subset of $\mathbf{T}_{0}(F)\cap \G^{rs}(F)$. 
Then, $\theta _{\pi}|_{S}$ is the restriction of a finite integral combination of unitary characters of $\mathbf{T}_{0}(F)$. 
\end{lemma}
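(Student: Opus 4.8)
The plan is to exploit the structure of $\theta_\pi$ as a restriction of a finite sum of matrix coefficients, together with the fact (from the previous section) that $\theta_\pi$ is represented by a locally constant function on regular semisimple elements which, near the elliptic torus $\mathbf{T}_0(F)$, can be analyzed via the compactly induced model $\pi \cong \cInd_K^G \rho$. Concretely, the key observation should be that $\mathbf{T}_0(F)$ has compact image modulo the center $Z$, and since $\G$ is semisimple $Z$ is finite, so $\mathbf{T}_0(F)$ itself is compact. Thus by Lemma \ref{lemmaSisfinite} (applied with $\H = \G$, using that $\gamma$ is elliptic) the set of $g \in Z\backslash G$ with $g^{-1}\gamma g \in K$ is compact, and hence $\Omega(K)$ decomposes into finitely many cosets modulo $K$. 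Restricting Arthur's formula (or rather, since $\gamma$ is elliptic, the simpler orbital-integral expression $\theta_\pi(\gamma) = \sum_{gK} \int_K \chi_\rho(k^{-1}g^{-1}\gamma g k)\,dk$) to the torus, we want to understand how $\theta_\pi$ varies as $\gamma$ ranges over $S \subseteq \mathbf{T}_0(F)$.

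First I would write $\theta_\pi|_S$ as a finite sum $\sum_{gK \in \Omega(K)/K} \int_K \chi_\rho(k^{-1}g^{-1}tgk)\,dk$ for $t \in S$ — but this requires $\Omega(K)$ (which depends on $\gamma$) to be controlled uniformly over $t \in S$; since $S$ is a closed, hence compact, subset of $\mathbf{T}_0(F) \cap \G^{rs}(F)$, one can enlarge $K$-conjugation arguments to a fixed compact set, so the $g$'s may be taken from one fixed finite list. Then each summand $t \mapsto \int_K \chi_\rho(k^{-1}g^{-1}tgk)\,dk$ is, for fixed $g$, a class function on the compact group $K$ precomposed with the map $t \mapsto g^{-1}tg$; when $g^{-1}tg$ lands in $K$, the integrand is simply $\chi_\rho(g^{-1}tg)$ (no averaging needed if we note $\chi_\rho$ is already a class function on $K$). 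So on the locus where $g^{-1}Sg \subseteq K$, the contribution is $\chi_\rho \circ (g^{-1}(\cdot)g)$, which is a character-type function on the compact abelian group $\mathbf{T}_0(F)$: indeed $\chi_\rho$ restricted to the abelian compact group $g^{-1}\mathbf{T}_0(F)g \cap K$ decomposes, by abelian representation theory, as an integral combination of its unitary characters, and pulling back along conjugation gives unitary characters of $\mathbf{T}_0(F)$. One has to handle the partition of $S$ according to which $g$-translates land in $K$, but a compactness/locally-constant argument shows this is a finite partition into closed sets, and by extending each character from the (finite-index) subgroup $g^{-1}\mathbf{T}_0(F)g \cap K$ to all of $\mathbf{T}_0(F)$ one gets genuine unitary characters of $\mathbf{T}_0(F)$, at the cost of integer multiplicities.

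The main obstacle will be bookkeeping: ensuring that "finite integral combination" genuinely has \emph{integer} coefficients and finitely many terms, uniformly as one restricts $\rho$ to the various intersections $g^{-1}\mathbf{T}_0(F)g \cap K$. The subtlety is that $\chi_\rho$ on a compact abelian subgroup $A$ of $K$ is $\sum_\psi m_\psi \psi$ with $m_\psi \in \Z_{\geq 0}$ the multiplicity of the character $\psi$ in $\rho|_A$ — this is fine since $\rho$ is finite-dimensional — but the pullback and extension to $\mathbf{T}_0(F)$ must be done consistently across the finite partition of $S$, and one must check the extensions can be chosen unitary (they can, since $A$ has finite index in the compact group $\mathbf{T}_0(F)$, so any unitary character of $A$ extends to a unitary character of $\mathbf{T}_0(F)$) and that the resulting global expression on $S$ is well-defined. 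I would organize the proof as: (1) reduce to $\mathbf{T}_0(F)$ compact; (2) express $\theta_\pi|_S$ via the finite orbital sum using compactness of $\Omega(K)$ uniformly over $S$; (3) decompose each summand into unitary characters of the relevant compact abelian subgroup using finite-dimensionality of $\rho$; (4) extend to unitary characters of $\mathbf{T}_0(F)$ and collect terms, noting all multiplicities are integers and the sum is finite. The genuinely delicate point is step (2), making the finiteness uniform over all $t \in S$ simultaneously, for which compactness of $S$ and the fact that $\Omega(K)$ varies semicontinuously should suffice.
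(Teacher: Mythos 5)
Your overall strategy (write $\theta_\pi(t)$ for $t\in S$ as a finite orbital sum $\sum_{gK}\chi_\rho(g^{-1}tg)$ over cosets meeting a fixed compact set, then decompose along the compact abelian group $\mathbf{T}_0(F)$) is the same as the paper's, but the decisive step is missing, and as written your steps (3)--(4) do not work. For a fixed coset representative $g$, the function $t\mapsto\chi_\rho(g^{-1}tg)$ on $\mathbf{T}_0(F)$ is the character of $\rho^{g^{-1}}$ restricted to the open subgroup $A_g:=\mathbf{T}_0(F)\cap gKg^{-1}$, \emph{extended by zero} outside $A_g$. On $A_g$ it is indeed $\sum_\psi m_\psi\psi$ with integer multiplicities, but the extension by zero is not an integral combination of characters of $\mathbf{T}_0(F)$: choosing one unitary extension of each $\psi$, as you propose, yields a function that does not vanish at the points of $S$ where $g^{-1}tg\notin K$, so the resulting global expression no longer equals $\theta_\pi$ there --- and the lemma needs a single combination valid on all of $S$ (this is what is later squared and integrated over the whole torus); whereas reproducing the extension by zero by summing over \emph{all} $[\mathbf{T}_0(F):A_g]$ extensions forces the non-integral coefficient $[\mathbf{T}_0(F):A_g]^{-1}$. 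Your ``partition $S$ and extend consistently'' does not escape this dichotomy, since unitary characters of $\mathbf{T}_0(F)$ cannot be arranged to vanish on prescribed pieces of the partition.

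The missing idea --- and exactly how the paper closes this gap --- is to group the coset representatives into double cosets $\mathbf{T}_0(F)\backslash\mathcal{F}/K$. Because $\mathbf{T}_0(F)$ is abelian and $t\in\mathbf{T}_0(F)$, all cosets $rgK$ with $r\in\mathbf{T}_0(F)$ inside one double coset contribute the \emph{same} value $\chi_\rho(g^{-1}tg)$, and their number is precisely $[\mathbf{T}_0(F):A_g]$; hence the total contribution of a double coset is the character of ${\rm Ind}_{A_g}^{\mathbf{T}_0(F)}\rho^{g^{-1}}$, a finite-dimensional representation of $\mathbf{T}_0(F)$, whose character is automatically a nonnegative integral combination of unitary characters and automatically vanishes off $A_g$. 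Summing over the finitely many double cosets gives one representation $\sigma$ with $\chi_\sigma|_S=\theta_\pi|_S$, with no extension or partition bookkeeping at all. Two secondary remarks: ``closed, hence compact'' is unjustified ($S$ is closed only in the regular locus, which is open in $\mathbf{T}_0(F)$), so the uniform finiteness you need in step (2) should be obtained, as in the paper, from compactness in the intended application together with local constancy of $\theta_\pi$ on a finite cover of $S$, rather than from an unproved semicontinuity of $t\mapsto\Omega_t(K)$.
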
 
\begin{proof} 
For any regular elliptic $g\in \mathbf{G} (F)$ write 
\begin{equation*} 
\Omega _{g}(K):= \{ x\in \G(F):\ \ x^{-1}gx\in K\}.  
\end{equation*}
By Proposition \ref{propositionomegacompact}, Arthur's formula and the local constancy of $\theta _{\pi}$, for every regular elliptic $g$ there exists an open neighborhood $U_{g}$ of $g$ such that 
\begin{equation*} 
\theta _{\pi}(u_{g})=\sum _{xK\in\Omega_{g}(K)/K}\chi_{\rho}(x^{-1}u_{g}x)  
\end{equation*} for every $u_{g}\in U_{g}$. Choose a finite cover $\{ U_{g_{i}}\} _{i=1}^{n}$ of $S$ with $g_{i}\in S$. 
Let $\mathcal{F}:=\bigcup _{i=1}^{n}\Omega _{g_{i}}(K)$. This set is left $\mathbf{T}_{0}(F)$-invariant. Consider the representation 
\begin{equation*} 
\sigma := \bigoplus _{x\in \mathbf{T}_{0}(F)\setminus \mathcal{F} /K} {\rm Ind}_{\mathbf{T}_{0}(F)\cap K^{x^{-1}}}^{\mathbf{T}_{0}(F)}\rho ^{x^{-1}} 
\end{equation*} 
where $K^{x^{-1}}:=xKx^{-1}$ and $\rho ^{x^{-1}}(a):=\rho (x^{-1}ax)$ for any $a\in K^{x^{-1}}$. For every $g_{i}$ we have 
\begin{align*} 
\chi_{\sigma}(g_{i})&=\sum _{x\in \mathbf{T}_{0}(F) \setminus \mathcal{F} / K}\ \ \sum _{\substack{ r\in \mathbf{T}_{0}(F)/ \mathbf{T} _{0} (F) \cap K^{x^{-1}},\\ g_{i} \in K^{x^{-1}}}}  \chi _{\rho^{x^{-1}}}(g_{i})=\sum _{xK\in \Omega _{g_{i}}(K)/K}\chi _{\rho}(x^{-1}g_{i}x). 
\end{align*}

Therefore $\theta _{\pi}|_{S}=\chi _{\sigma}|_{S}$. \end{proof} 
\begin{lemma}\label{lemmaorth} 
Let $\G$ be a semisimple group over a non-Archimedean local field $F$. For any torus $\mathbf{T}$ write $W_{\mathbf{T}}:=N(\mathbf{T}(F))/\mathbf{T}(F)$ where $N(\mathbf{T}(F))$ denotes the normalizer of $\mathbf{T}(F)$ in $\G(F)$. Then, for every supercuspidal representation $\pi$ of $\G(F)$ we have 
\begin{equation*} 
\sum _{[\mathbf{T}]\subseteq \G}\frac{1}{|W_{\mathbf{T}}|}\int _{\mathbf{T}(F)\cap \G^{rs}(F)}|\Delta (t)|_{F}|\theta _{\pi} (t)|^{2}dt=1 
\end{equation*} 
where $[\mathbf{T}]$ runs over all $\G(F)$-conjugacy classes of maximal elliptic tori and $dt$ denotes the probabilistic Haar measure on $\mathbf{T}(F)$.
\end{lemma}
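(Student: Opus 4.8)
The plan is to recognize this statement as Kazhdan's orthogonality relations for the trace character of a supercuspidal representation, specialized to a semisimple group (so that $\pi$ has a formal degree with respect to the chosen Haar measure, and since we have normalized $\mu(\G(\O))=1$ in the notation section). The underlying principle is that for a supercuspidal (hence discrete series) representation $\pi$, the matrix coefficients are square-integrable modulo center, and for $\G$ semisimple the center is compact, so the matrix coefficients are genuinely $L^2(\G(F))$. This is what makes the naive ``$\langle \theta_\pi, \theta_\pi\rangle = 1$'' rigorous.

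First I would recall the Weyl integration formula for $\G(F)$: for a suitable function $\varphi$ on $\G(F)$,
\begin{equation*}
\int_{\G(F)}\varphi(g)\,dg \;=\; \sum_{[\mathbf{T}]}\frac{1}{|W_{\mathbf{T}}|}\int_{\mathbf{T}(F)}|\Delta(t)|_{F}\int_{\mathbf{T}(F)\setminus \G(F)}\varphi(x^{-1}tx)\,dx\,dt,
\end{equation*}
where $[\mathbf{T}]$ runs over $\G(F)$-conjugacy classes of maximal tori; since $\pi$ is supercuspidal its character vanishes on the non-elliptic regular set (more precisely, the orbital integrals we integrate against are supported on elliptic elements), so only the elliptic classes survive, matching the sum in the statement. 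Next I would apply this formula to $\varphi(g) = f(g)\overline{f(g)}$ where $f$ is a suitable matrix coefficient of $\pi$, or more cleanly, I would invoke the abstract Schur orthogonality for discrete series: $\deg(\pi)\int_{\G(F)}\langle \pi(g)v,w\rangle\overline{\langle\pi(g)v',w'\rangle}\,dg = \langle v,v'\rangle\overline{\langle w,w'\rangle}$. The cleanest route, however, is to cite the already-known form of Kazhdan's orthogonality relation (from Kazhdan's paper on cuspidal geometry, or its treatment in Clozel's or Arthur's work): $\int_{\G(F)/Z}|\Theta_\pi(g)|^2\,dg$ expressed via Weyl integration equals $1$ after the formal-degree normalization, and here $Z$ is compact so the quotient integral is the honest integral. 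One then reorganizes the Weyl integration into the sum over $[\mathbf{T}]$ with the $\frac{1}{|W_{\mathbf{T}}|}$ weights and the $|\Delta(t)|_F$ Jacobian factors, and observes that $dt$ in the statement being the probability Haar measure on the (compact, since $\mathbf{T}$ is elliptic in a semisimple group) torus $\mathbf{T}(F)$ is exactly the normalization consistent with $\mu(\G(\O))=1$ and $\deg(\pi)$ defined accordingly.

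The main obstacle — and the point most worth being careful about — is the normalization bookkeeping: making sure that the constant on the right-hand side is exactly $1$ rather than some power of $q$ or a ratio of measure normalizations. This requires pinning down how $\deg(\pi)$ is normalized relative to the fixed Haar measure $\mu$ with $\mu(\G(\O))=1$, how the quotient measures $dx$ on $\mathbf{T}(F)\setminus\G(F)$ interact with the probability measure $dt$ on $\mathbf{T}(F)$, and confirming that for $\G$ semisimple the matrix coefficients really lie in $L^2(\G(F))$ (not merely $L^2$ modulo center) so that the orthogonality relation holds with no extra volume factor. A secondary, more conceptual point is justifying that the character $\theta_\pi$, which a priori is only locally $L^1$, actually has $|\Delta(t)|_F^{1/2}\theta_\pi(t)$ in $L^2$ on each elliptic torus — this follows from Harish-Chandra's boundedness of $|\Delta|^{1/2}\Theta_\pi$ on the regular set together with compactness of the elliptic tori, but it should be noted explicitly. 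Once these normalizations are fixed, the identity is a direct consequence of Schur orthogonality plus the Weyl integration formula, with the supercuspidality of $\pi$ ensuring the sum runs only over elliptic tori.
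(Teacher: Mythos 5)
Your main route --- quoting the known orthogonality relations for supercuspidal characters on the elliptic set and then doing the normalization bookkeeping via the Weyl integration formula --- is in substance the paper's proof: the paper simply cites Schneider--Stuhler's version of Kazhdan's orthogonality relations (their Theorems III.4.6 and III.4.21) and uses Kazhdan's \S 3, Lemma 1 together with the Weyl integration formula to pin down the measure on elliptic conjugacy classes. One small simplification you could make: as stated, the identity involves no integration over $\G(F)$ at all (only probability Haar measures on the compact elliptic tori $\mathbf{T}(F)$), so it is independent of the choice of Haar measure on $\G(F)$ and of the formal degree; the worry about matching $\deg(\pi)$ with the normalization $\mu(\G(\mathcal{O}_{F}))=1$ evaporates once the statement is phrased, as in the lemma, purely as an elliptic inner product equal to $1$.

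Three statements in your sketch would not survive a careful write-up and should be repaired. First, the character of a supercuspidal representation does \emph{not} in general vanish on the non-elliptic regular set (already for $\SL_{2}$ the character of a depth-zero supercuspidal is nonzero at split regular elements with unit eigenvalues); what vanishes are the orbital integrals of its matrix coefficients at non-elliptic regular elements (the Selberg principle), as your parenthesis correctly says, and the restriction of the sum to elliptic classes is part of the cited orthogonality theorem rather than a consequence of character vanishing. Second, $\int_{\G(F)/Z}|\theta_{\pi}(g)|^{2}\,dg$ diverges: after Weyl integration each conjugacy class contributes a factor $\vol(\mathbf{T}(F)\backslash \G(F))=\infty$, so the ``cleanest route'' must be formulated from the outset as the elliptic inner product, i.e.\ exactly the identity of the lemma, and the citation must be to a source stating precisely that (Kazhdan, Clozel, or Schneider--Stuhler as in the paper). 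Third, applying Weyl integration to $\varphi=f\overline{f}$ for a matrix coefficient $f$ produces $\int_{\mathbf{T}(F)\backslash \G(F)}|f(x^{-1}tx)|^{2}\,dx$ in the integrand, not $|\theta_{\pi}(t)|^{2}$; to convert elliptic orbital integrals of $f$ into character values you need the Harish-Chandra/Arthur identity (the elliptic, unweighted case of the formula the paper uses in the proof of its main theorem), which your sketch omits. With that ingredient added your from-scratch derivation goes through; without it, only the citation route is complete --- and that route is exactly what the paper takes.
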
\begin{proof} Follows from the Kazhdan's orthogonality relations \cite[Theorem III.4.21]{SchneiderStuhler} and \cite[Theorem III.4.6.]{SchneiderStuhler}. We apply \cite[\S 3, Lemma 1.]{Kazhdan} and Weyl integration formula to determine the measure $dc$ in \cite[Theorem III.4.21]{SchneiderStuhler}. 
\end{proof} 
\begin{proof}[Proof of Theorem \ref{thmellipticexplicit}] Denote by $\mathbf{T}_{0}$ maximal split torus such that $\gamma \in \mathbf{T}_{0}(F)$. 
Let $m:=\frac{1}{2}(\dim \G- {\rm rk}\, \G )$ and choose a set of positive roots $\lambda _{1},\ldots , \lambda _{m}$ of $\mathbf{T}_{0}$ defined over the algebraic closure of $F$. We have 
\begin{equation*} 
\Delta (t)=\prod _{j=1}^{m}(1-\lambda _{j}(t))(1-\lambda _{j}(t)^{-1}). 
\end{equation*}Let 
\begin{equation*} 
Z_{i}:=\{ t\in \mathbf{T}_{0}(F)|\ \ |1-\lambda _{i}(t)|_{F}<|1-\lambda _{i} (\gamma)|_{F}\}. 
\end{equation*} 
Let $E$ be a finite extension of $F$ such that every character of $\mathbf{T}_{0}$ is defined over $E$. Denote by $\mathcal{O}_{E}^{\times}$ the group of all invertible elements in the ring  of integers of $E$. By the compactness of $\mathbf{T}_{0}(F)$, $\lambda _{j}(\mathbf{T}_{0}(F))\subseteq \mathcal{O}_{E}^{\times}$ for every $j\in\{1,\ldots, m\}$. By Pontryagin duality, for every $j\in\{1, \ldots , m\}$ we can find a unitary character $\alpha _{j}\colon \mathcal{O}_{E}^{\times} \to \mathbb{C}^{\times}$ such that $\alpha _{j}(\lambda _{j}(\gamma ))\neq 1$ and $\alpha _{j}(\lambda _{j}(z))=1$ for every $z\in Z_{j}$. Define 
\begin{equation*} 
\beta (t):=\prod _{j=1}^{m}(1-\alpha _{j}(\lambda _{j}(t))) 
\end{equation*} 
for every $t\in \mathbf{T}_{0}(F)$. Since $\beta (t)$ is a non-zero algebraic integer, there exists a Galois automorphism $\kappa$ such that 
\begin{equation*} 
\alpha (t):=\prod _{j=1}^{m}(1-\kappa(\alpha_{j}(\lambda _{j}(t))))
 \end{equation*} 
satisfies $|\alpha (t)|\geqslant 1$. 
Since $\alpha (z)=0$ for every $z\in \bigcup _{j=1}^{m}Z_{j}$, for every $t\in \mathbf{T}_{0}(F)$ we have
\begin{equation}\label{equationdelta} 
\frac{|\alpha (t)|^{2}}{|\Delta (t)|_{F}} \leqslant \frac{|\alpha (t) |^{2}}{|\Delta (\gamma )|_{F}}.  
\end{equation}
Let $dt$ be the probabilistic Haar measure on $\mathbf{T}_{0}(F)$. By Lemma \ref{lemmaorth},
 \begin{equation} \label{equationorth} 
 \int _{\mathbf{T}_{0}(F)\cap \G^{rs}(F)}|\Delta (t)|_{F}|\theta _{\pi}(t)|^{2}dt\leqslant |W|. 
 \end{equation}Putting (\ref{equationdelta}) and (\ref{equationorth}) we get 
 \begin{equation}\label{equationl} 
 \int _{\mathbf{T}_{0}(F)\cap \G^{rs}(F)}|\alpha (t)|^{2}|\theta _{\pi}(t)|^{2}dt\leqslant \sup _{t\in \mathbf{T}_{0}(F)\cap \G^{rs}(F)}|\alpha (t)|^{2}|\Delta (\gamma)|_{F}^{-1}|W|\leqslant 2^{2m}|\Delta (\gamma )|_{F}^{-1}|W|. 
 \end{equation} 
 Define $R:=\{ t\in \mathbf{T}_{0}(F)\cap \G^{rs}(F)| \ \ \alpha (t)\neq 0\}$. By Lemma \ref{lemmasumofcharacters} applied to $R$, $\theta _{\pi}$ is a finite integral combination of unitary characters so 
 \begin{equation*} 
 \alpha\theta _{\pi }|_{R}=\sum _{\varphi } a_{\varphi }\varphi,  
 \end{equation*} 
 where $a_{\varphi}\in \mathbb{Z}$ and each $\varphi$ is a unitary character of $\mathbf{T}_{0}(F)$. By (\ref{equationl}), \begin{equation*}\sum _{\varphi } a _{\varphi }^{2} =\int _{\mathbf{T}_{0}(F)\cap  \mathbf{G} ^{rs} (F)} |\alpha (t)|^{2} |\theta _{\pi } (t)|^{2} dt\leqslant 2^{2m}|\Delta (\gamma)|_{F}^{-1}|W|.\end{equation*} Since $a _{\varphi } \in \mathbb{Z}$, 
 \begin{equation*} 
 |\theta _{\pi}(\gamma)|\leqslant |\alpha (\gamma )\theta _{\pi}(\gamma)|=\left|\sum _{\varphi} a_{\varphi}\varphi (\gamma )\right| \leqslant \sum _{\varphi} |a_{\varphi}| \leqslant 2 ^{\dim \G- {\rm rk}\, \G}|\Delta (\gamma )|_{F}^{-1}|W|. 
 \end{equation*} 
 \end{proof} 

\bibliography{ref}
\bibliographystyle{plain}
\end{document}